\numberwithin{equation}{section}             
\theoremstyle{definition}                          
\newtheorem{thm}{Theorem}[section]     
\newtheorem{prop}[thm]{Proposition}      
\newtheorem{cor}[thm]{Corollary}            
\newtheorem{lem}[thm]{Lemma}             
\theoremstyle{definition}               
\newtheorem{dfn}[thm]{Definition}
\newtheorem{rem}[thm]{Remark}          
\newtheoremstyle{prova}
{3pt}
{3pt}
{}
{}
{\textbf}
{.}
{\newline}
{}
\theoremstyle{prova}
\def\R{\mathbb R}
\def\N{\mathbb N}
\def\E{\mathbb E}
\def\K{\mathbb K}
\def\P{\mathbb P}
\def\Q{\mathbb Q}
\def\sha{{\cal A}}
\def\shb{{\cal B}}
\def\shc{{\cal C}}
\def\shd{{\cal D}}
\def\shf{{\cal F}}
\def\shl{{\cal L}}
\def\shp{{\cal P}}
\def\1{\mathds{1}}
\def\ra{\rightarrow}
\def\ve{\varepsilon}
\def\be{\begin{equation}}
\def\ee{\end{equation}}
\def\bs{\begin{split}}
\def\es{\end{split}}
\author[1]{Carlo Ciccarella}
\author[2]{Francesco Russo}
\affil[1]{EPFL Lausanne\\
  Institut de Math\'ematiques\\ Station 8\\ CH-1015 Lausanne
  (Switzerland)
}
\affil[2]{ENSTA Paris\\
  Institut Polytechnique de Paris\\
Unit\'e de Math\'ematiques appliqu\'ees\\
F-91120 Palaiseau (France)
}
\begin{document}

\date{May 2nd, 2025}

\title {
  $C^{0,1}$-It\^o chain rules
and generalized solutions of parabolic PDEs.}


\maketitle

{\bf Abstract}

In this paper we first establish an It\^o formula for
a finite quadratic variation process $X$
expanding $f(t,X_t),$ when $f$ is of class $C^2$
in space and is absolutely continuous in time.

Second, via a Fukushima-Dirichlet decomposition
we obtain an explicit chain rule for $f(t,X_t)$,
when $X$ is a continuous semimartingale
and $f$ is a ``quasi-strong solution''
(in the sense of approximation of classical solutions)
of a parabolic PDE.

Those results are applied in a companion paper to establish a
verification theorem in stochastic differential games.

\medskip

{\bf Key words and phrases:} It\^o chain rules; Fukushima decomposition; weak Dirichlet processes; quasi-strong solutions of PDEs.
\\

{\bf Mathematics Subject Classification 2020:}  60H05; 60H10; 60H30; 91A15.  




\bigskip

\newpage


\section{Introduction}\label{Intro}

In this paper we aim to establish two It\^o chain rules
in view of applications to a stochastic differential game and stochastic control,
see e.g. the companion paper \cite{CR2},
expanding $f(t,X_t)$, where
$f:\R_+ \times \R^d \rightarrow \R$ and $X$ is a stochastic
process.
A preliminary version of that paper is
the second part of

The first formula, stated in Proposition
\ref{ItoGen},
is related to $ f \in C_{ac}^{0,2} (\R_+ \times \R^d)$
(see Definition \ref{C02ac}),
when $X$ is a finite quadratic variation process.
The 
second is Corollary \ref{representation}, where
$f\in C^{0,1}(\R_+ \times \R^d)$
is a generalized (quasi-strong) solution of some parabolic PDE
and $X$ is an It\^o process.
This makes use of a weak Dirichlet decomposition
that one could call Fukushima-Dirichlet decomposition,
see Theorem \ref{Rep}.

If $X$ is a semimartingale and $f \in C^{1,2}$,
then
classical It\^o formula tells us that $f(\cdot,X)$ is still a semimartingale.
If $f$ has lower than $C^{1,2}$-regularity, then $f(\cdot,X)$
can still be a semimartingale, via It\^o-Tanaka type formulae 
involving local time for instance.
Various generalizations appear in the setting  of optimal stopping problems,
where one gets expansion
of $f(\cdot,X)$ that
for   $ f$ having  $C^{1,2}$-regularity within the continuation
($\shc$) and stopping ($\shd$) regions but the spatial derivatives may not be continuous across the
boundary that separates these two sets, see e.g. 
\cite{Peskir2} and
\cite{DeAngelis}.
If $f$ has much lower regularity, for instance
if $f \in C^1(\R^d)$ then
$f(X)$ is not necessarily a semimartingale.
If $d = 1$, for instance, if $X$ is a standard Brownian motion then
$f(X)$ is a semimartingale if and only if $f$ is difference of convex functions,
see e.g. \cite{cjps}.
 The first It\^o formulae expanding $f(X)$
when $f\in C^{1}(\R^d)$ and $X$ is a (reversible) semimartingale continuous
was performed by \cite{rv96}, see also \cite{fp} in the case of Brownian
motion under much weaker conditions.
In this case $f(X)$ is generally a Dirichlet process
i.e. the sum of a local martingale and a zero quadratic variation
process, see \cite{FolDir}.
In the jump case, at our knowledge the first
paper in this direction was \cite{erv}.
Those papers expressed the It\^o formula usual term involving 
the second order derivatives in term of covariation.
For instance if $f \in C^1(\R^d)$ and
$S$ is a reversible semimartingale with usual decomposition
$S = M + V$, then
$$f(X) = f(X_0) + \int_0^\cdot \partial_x f(X) dS + \frac{1}{2}
[\partial_x f(X),X)].$$
An alternative approach
was the so called Bouleau-Yor formula, which, in dimension 1
expressed the remainder via local time, see \cite{by}.
Since then an incredible amount of formulae have been produced,
see e.g. \cite{Russo_Vallois_Book} for a (even not complete)
list of references.

Beyond semimartingales, when $f \in C^{1,2}$,
and $X$ is a finite quadratic variation process
the first contribution was done by \cite{fo}
which was followed by a related stochastic calculus
in \cite{rv4}. In the rough case, there were also
many chain rules, see e.g. \cite{gnrv} or
\cite{friz_hairer}, but this goes beyond the scope
of this paper.

The present paper makes use of 
weak Dirichlet processes as a tool.
That notion  was introduced
in \cite{er2}, see also Definition \ref{DWDir}.
If a semimartingale is the sum of a local martingale $M$ plus a finite variation process $A$,
a weak Dirichlet process $X$ (generalizing the notion
of Dirichlet process of \cite{FolDir}), is the sum of a  local  martingale and a process which is adapted and martingale orthogonal, in the sense that the covariation $[A,N]  = 0$ for every continuous local martingale $N$.
The latter decomposition
was also called  Fukushima-Dirichlet decomposition in \cite{rg2}
and subsequent papers.
It turns out that a  $C^{0,1}$-function of  a weak Dirichlet process (with finite quadratic
variation) is again a weak Dirichlet process, see Proposition 3.10 of \cite{rg2}; applications to a verification theorem for stochastic control
were performed in \cite{rg1} in relation with $C^{0,1}$-(so called) strong solution of the Hamilton-Jacobi-Bellman (HJB) PDE.  When $X$ is a c\`adl\`ag process
that result was generalized in Theorem 3.37 in \cite{BandiniRusso_RevisedWeakDir}. 
Fukushima type decomposition was introduced in the path-dependent case by
\cite{DGR2} in the framework of Banach space valued stochastic calculus.
More recently, \cite{BLT} has extended Proposition 3.2 in \cite{rg2} to a
path-dependent setting, by making use of the Dupire derivative.
A subsequent recent paper, 
generalizing a   $C^{0,1}$-type chain rule (which is  indeed a $C^{0,1}$-Fukushima decomposition in our language)
was performed by \cite{BTW},
where the $1$ corresponds to a functional derivative in the space and
also in the measure in the sense of Definition 5.43 of \cite{CarmonaDelarue1}. That derivative is strongly connected with
the Lions-derivative.
    They have a chain rule expanding $V(t,X_t,\shl_{X_t})$ where $X$ is a semimartingale and $\shl_{X_t}$.
    The resulting process in all cases is a weak Dirichlet process. In    \cite{BTW}, the authors obtain a significant application
    to a verification theorem for a McKean–Vlasov optimal control problem when 
    the value function    $V$ is of class $C^{0,1}$.
    That result of course can be applied to  the particular case of classical control problems,
    when the coefficients do not depend on the law. This  is also new,
since,
in the     
    literature, a comparable verification theorem was the one of
    \cite{rg1} which (differently) started with the existence of a $C^{0,1}$-solution
    of Hamilton-Jacobi PDE.

As mentioned earlier, the first chain rule It\^o formula we prove
(Proposition \ref{ItoGen}) holds for functions $f \in C^{0,2}_{ac}$,
(see Definition \ref{C02ac}) and finite quadratic variation processes.
 In particular $f$ is absolutely continuous in time.
 The second It\^o formula (Corollary \ref{representation})
is based on
the Fukushima-Dirichlet decomposition formulated in Theorem \ref{Rep},
holds for $f \in C^{0,1}$,
being a so called {\it quasi-strong solution} 
 (see Definition \ref{strongNU})
of some parabolic PDE
and $X$ being a specific weak Dirichlet process with diffusive
local martingale component and a finite quadratic variation
martingale orthogonal component.

Contrarily to the case of $C^1$-transformations of time-reversible semimartingales,
we drive the attention on the fact that the $C^{0,1}$-decompositions
mentioned earlier do not provide an explicit decomposition
of the martingale orthogonal term. One of the aspects of this paper,
see Theorem \ref{Rep} and Corollary \ref{representation},
consists in formulating  an explicit expression of the martingale orthogonal process.

A quasi-strong solution  to a parabolic PDE problem is defined as the limit
of {\it quasi-strict solutions} (see Definition \ref{strict}) to a
sequence of approximating PDEs,  which are $C^{0,2}_{ac}$-functions.

Potential applications of the present paper
appear each time we need to apply a chain
rule of the type $u(t,S_t)$ where
only a non-regular solution $u$ of an underlying  PDE is available and
$S$ is an underlying stochastic process.
 One is related to
the resolution of the identification
       problem in a forward-backward stochastic differential equations,
       where the solution $(Y,Z)$ in the Brownian case (resp. $(Y,Z,K)$ in the
       jump case), see e.g.       \cite{FuhrmanTessitore} (resp. \cite{BandiniBSDEs}),
       when one is interested
       in expressing $Z$ (resp. $Z,K$) as a ``gradient'' of
       a solution of some semilinear Kolmogorov PDE.
       In this case $S$  is a solution
       of an underlying forward equation.

       In the companion paper \cite{CR2},
we apply Corollary \ref{representation}
to establish a verification theorem in the framework of stochastic control
and stochastic differential games. In particular
one applies the chain rule to quasi-strong solutions of
Hamilton-Jacobi-Bellman (HJB) and Bellman-Isaacs (BI) equations
which in general they are not expected to be neither  $C^{1,2}$
nor $C^{0,2}_{ac}$. 

Indeed, in stochastic control (resp. game theory problems), the driver of the parabolic PDE, which is the HJB (resp. BI equation) is the Hamiltonian,
may be highly non-regular. In our formulation, it corresponds to the function $h$ in \eqref{parabolic}. Therefore, a priori, we cannot expect much regularity also from the solution to the PDE. If we now replace
$h$ (resp. the terminal function $g$)
by a sequence of smoother (in space) functions, converging to the
$h$ (resp. $g$),
we can hope that the existence of solutions to this sequence of PDEs is somehow easier to prove.
This is the main idea behind these two notions of solutions. 
The notion of ``quasi-strong'' (resp. ``quasi-strict'')
solution of Definition \ref{strongNU} (resp. \ref{strict}) extends the one of ``strong'' (resp. ``strict'') solution of \cite{rg2}, Definition 4.2  (resp. Definition 4.1).
In particular, the quasi-strict solution is
not of class
$C^1$ in time.
In \cite{rg2}, the coefficients of the PDE differential operators are
supposed to be continuous.
In this work, we also drop any time regularity assumption on the coefficients of the parabolic operator \eqref{operator}.

In \cite{rg2}, the convergence of the approximating functions to the
Hamiltonian (characterizing the notion of ``strong solution'') is supposed to be uniform in time and space, therefore implying that the Hamiltonian has to be continuous in time.
In our formulation of quasi-strong solution, we replace uniform
convergence in  time  by an $L^1$-convergence, thus allowing for Hamiltonians which are not continuous in time
as well as the coefficients of the PDE.
An example of existence of quasi-strict solution is shown in Lemma
\ref{StrictEx}.  We also show that the notion
of quasi-strong solution is related to that of a mild solution.

Our results are organized as follows. 
In Section \ref{sec:FQV} we establish Proposition
\ref{ItoGen}, which states the It\^o formula for $C^{0,2}_{ac}$ functions
of finite quadratic variation processes.
Section \ref{SSolutionConcept} is devoted to various types of solutions of PDEs, i.e. quasi-strict, which generalizes
the classical, mild and quasi-strong.
Finally Section \ref{sec:C01Chain} is devoted to the explicit chain rule type of subclasses of $C^{0,1}$-functions, see  Theorem \ref{Rep}
and Corollary \ref{representation}.

 \section{Complements on stochastic calculus for finite quadratic variation processes}

 \label{sec:FQV}

First, we recall some basic definitions and fix some notations, then we recall the definition of the Fukushima-Dirichlet decomposition that plays a key role in this paper.

\subsection{Analytical preliminaries}

 \label{SPrelim}

In this section, $ 0 \leq t < T <\infty$ will be fixed. The definition and conventions of this
section will be in force for the whole paper.
By convention a continuous function $\phi(s)$, $s \in [t,T]$, is extended to the whole line setting 

$\phi(s) = \left\{ 
\begin{array}{l l}
\phi(t) \ \ \ &s \leq t ,\\
\phi(T) \ \ \ &s\geq T.
\end{array}\right. $

If $E$ is a (subset of) finite-dimensional  space,
 $C^{0} (E) $ denotes the space of all continuous functions $f: E \rightarrow \R$.
It is a Fr\'echet space equipped with the topology of 
the uniform convergence of $f$  on each compact.
If
$f : E \ra \R$
is a uniformly continuous function,  
we denote by $\gamma(f, \cdot)$ the modulus of continuity of $f$.

If $k$ be a non-negative integer,
$C^k(\R^d)$ denotes the space of the functions such that all the derivatives up to order $k$ exist and are continuous. It is again 
a Fr\'echet space equipped with the topology of 
the uniform convergence of $f$ and all its derivatives on each compact.
Let $I$ be a compact real interval.   $C^{1,2} ( I \times \R^d)$ (respectively $C^{0,1} ( I \times \R^d)$), is the space of continuous functions
$f: \ I \times \R^d \ra \R $,  
such that  $ \partial_s f, \partial_x f, \partial^2_{xx} f  $ (respectively $ \partial_x f $)
are well-defined  and  continuous. This space is also a  Fr\'echet space.
In general $\R^d$-elements will be considered as column vector,
with the exception of $\partial_x f$ which will by default
a row vector.

\begin{dfn} \label{C02ac}
  $C^{0,2}_{ac}(I \times \R^d) $ will be  the linear space of continuous
  functions $f: I \times \R^d \ra \R  $ such that
  the following holds.
\begin{enumerate}
 \item $f(s,\cdot) \in C^2 (\R^d)$ for   all $s \in I$.
 \item For any $x \in \R^d $ the function $s\mapsto f(s,x) $ is absolutely continuous and
   for almost all $s \in I$.
    $x \mapsto \partial_s f(s,x)$ is continuous.
\item  For any compact  $K \subset \R^d $,  $\sup_{x\in K} |\partial_s f(\cdot, x) | \in L^1 (I).$
\item
  Let $g$ be any second order space derivative of $f$.
  \begin{enumerate}
\item  $g$ is continuous with respect to the space variable $x$ varying on each compact $K$, uniformly with respect to $s \in I;$
  \item {\it for every $x \in \R^d$, $s \mapsto g(s,x)$ is a.e. continuous.}
\end{enumerate}
\end{enumerate}
Suppose that $f$ fulfills all previous items except 4.(b) which is replaced by

\medskip
4.(b) Bis: {\it for every $x \in \R^d$ , $s \mapsto g(s,x)$ has at most
  countable discontinuities.}

\medskip
In this case $f$ will be said to belong 
to
$C^{0,2}_{ac, count}(I \times \R^d). $ 
\end{dfn}

In the sequel $(\Omega, \shf, (\shf_s)_{s\geq t}, \P)$ will be a given stochastic basis satisfying the usual conditions. $(W_s)_{s\geq t} $ will denote a classical
$(\shf_s)_{s\geq t} $-Brownian motion with values in $\R^d$. 
A sequence of processes $(X^n_s)_{s\geq t} $ will be said to converge u.c.p.
if the convergence holds in probability uniformly on compact intervals.

The space  $ C_\shf(I \times \Omega; \R) $
of  all continuous $\R^d$-valued
$(\shf_s)_{s\in [t,T]}$-progressively measurable  processes
is a Fr\'echet space, if 
equipped  with a metric related the u.c.p. convergence.

\subsection{Weak Dirichlet processes and Fukushima Dirichlet decomposition}

\begin{dfn} \label{DInt}
  Let $(X_s)_{s \in [t,T]} = (X^1_s, \cdots, X^d_s)_{s \in [t,T]}$ be a vector of continuous processes and $(Y_s)_{s \in [t,T]} = (Y^1_s, \cdots, Y^d_s)_{s \in [t,T]}  $ be vector process with integrable paths. We define
  $$I^-(\ve, Y, dX)(s) =  \int_t^s Y_r \cdot \frac{X_{r + \ve} - X_r}{ \ve} dr,
  $$ where $\cdot$ denotes the inner product in $\R^d.$
If $d = 1$ then, of course, we omit the $\cdot$.
  We also say that the \textbf{(vector) forward integral} of $X$ with respect to $Y$ exists if

\begin{enumerate}
\item $ \lim_{\ve \ra 0} I^-(\ve, Y, dX)(s)  $ exists in probability for any $s \in [t,T]$,
\item the previous limit admits a continuous version.
\end{enumerate}
That integral limit process, if exist, is denoted by $ \int_t^s Y_r d^-X_r, s \in [t,T]$.
\end{dfn}

\begin{dfn} \label{DCov}
  Let $(X_s)_{s \in [t,T]} = (X^1_s, \cdots, X^d_s)_{s \in [t,T]}$ and $(Y_s)_{s \in [t,T]} = (Y^1_s, \cdots, Y^d_s)_{s \in [t,T]}  $ be two vector of continuous processes. We define
  \begin{equation} \label{epsCov}
[X,Y]^\ve(s) =
\frac{1}{\ve}  \int_t^s (X_{r+ \ve} - X_r) \cdot (Y_{r + \ve} - Y_r) dr, s \in [t,T]
    \end{equation}
and we say that
  the \textbf{(matrix) covariation or bracket} of $X$ and $Y$ exists if
\begin{enumerate}
\item $\lim_{\ve \ra 0} [X,Y]^\ve(s)  $ exists in probability for any $s \in [t,T]$,
\item the previous limit admits a continuous version.
\end{enumerate}
That  matrix-valued limit process, if it exists, is denoted by $ [X,Y]_s$.
\end{dfn}
If $X$ is a real process, $[X,X]$ will be also called {\bf quadratic variation} of $X$.
The forward integral, the covariations and the finite quadratic variation processes were introduced in  \cite{rv} and the first steps were performed in \cite{rv2}, 
see also the recent monograph \cite{Russo_Vallois_Book}.
\begin{rem} \label{Ito=Forw}
\begin{enumerate}
\item  If $X$ is a continuous $(\shf_s)_{s\geq t}$-semimartingale and $Y$ is an $(\shf_s)_{s\geq t}$-progressively measurable c\`agl\`ad  process
  (resp. an  $(\shf_s)_{s\geq t}$-semimartingale) then
  $\int_s^\cdot Y d^- X$  (resp. $[Y,X]$)
  coincides with $\int_s^\cdot Y dX$  (resp. the usual covariation).
  In particular if $X$ is bounded variation process then
  $\int_s^\cdot Y d^- X$ is the usual Lebesgue integral $\int_s^\cdot Y dX,$
    see Proposition 2.1 of \cite{rv2}.
  \item If $X$ is a continuous $\R^d$-valued $(\shf_s)_{s\geq t}$-semimartingale and $Y$ is an
$\R^d$-valued
    $(\shf_s)_{s\geq t}$-progressively measurable c\`agl\`ad  process
  then
  $\int_s^\cdot Y d X$  will denote the vector It\^o integral $\int_s^\cdot Y \cdot dX.$ 
  \end{enumerate}
  \end{rem}

  We define below a natural extension of the notion of Dirichlet processes.
  Those processes were defined by H. F\"{o}llmer, see Chapter 14 in
  \cite{FolDir}, see also \cite{Russo_Vallois_Book}.
We remind that an $(\shf_s)_{s \ge t}$-progressively measurable process $X$ is 
called Dirichlet if it is the sum of $(\shf_s)_{s \ge t}$-local martingale $M$
and a zero quadratic variation process $A$, i.e. such that $[A,A] = 0$.

\begin{dfn} \label{DWDir}
\begin{enumerate}
\item 
An  $(\shf_s)_{s\in [t,T]}$-continuous progressively measurable process real-valued process $X$ is called $(\shf_s)_{s\in [t,T]}$-\textbf{weak Dirichlet process} if
\begin{equation}\label{Dec2}
 X= M + A,
\end{equation}
where
\begin{enumerate}
 \item $(M_s)_{s\in [t,T]}$ is a continuous $(\shf_s)_{s\in [t,T]}$-local martingale, 
\item $A_0 = 0 $ and $A$ is an $(\shf_s)_{s\in [t,T]}$-\textbf{martingale orthogonal process}, that is  $[N,A] = 0 $ for every $(\shf_s)_{s\in [t,T]}$-continuous local martingale $N$. 
\end{enumerate}
\item A continuous $\R^d$-valued process $ X = (X^1, \cdots, X^d)$ is said to be a $(\shf_s)_{s\in [t,T]}$-weak Dirichlet (vector)
(resp. $(\shf_s)_{s\in [t,T]}$-\textbf{martingale orthogonal} process),
if each component is $(\shf_s)_{s\in [t,T]}$-weak Dirichlet
(resp. {martingale orthogonal} process).
\end{enumerate}
\end{dfn}
\eqref{Dec2} was also called {\bf Fukushima-Dirichlet} decomposition, see \cite{rg2}.
\begin{rem}

\begin{enumerate}
\item The decomposition \eqref{Dec2} is unique, see Proposition 15.1 in
  \cite{Russo_Vallois_Book}.
  \item In the sequel, when self-explanatory, we will omit the filtration $(\shf_s)_{s\in [t,T]}$.
  \item A zero quadratic variation process is a martingale orthogonal process,
    by Kunita-Watanabe theorem, see Proposition 4.7 item 5. in  \cite{Russo_Vallois_Book}. 
 \item   Usual properties of weak Dirichlet processes are given in \cite{er}, \cite{er2}, \cite{rg2},
and in Chapter 15 of \cite{Russo_Vallois_Book}.
\end{enumerate}

\end{rem}

\subsection{The It\^o formula for finite quadratic variation
processes}

We state now our first It\^o type formula
which generalizes the classical F\"ollmer-It\^o type formula, see
\cite{FolDir} and \cite{rv2} in the framework of
calculus via regularization.

\begin{prop}\label{ItoGen}
  Let $f \in C^{0,2}_{ac}([t,T] \times \R^d ) $ and $(X_s)_{s \in [t,T]}$ be a
  continuous
  $\R^d$-valued process that
$[X,X]$ exists.

Suppose that one of the two conditions below is fulfilled.
\begin{enumerate} 
\item $f \in C^{0,2}_{ac,count}([t,T] \times \R^d ). $ 
\item The mutual covariations $[X^i,X^j]$
  are absolutely continuous, where $[X,X] = ([X^i,X^j])$.
  \end{enumerate}
  Then the following It\^{o} formula holds.
\begin{align}\label{ItoGen2}
  f(s, X_s)&= f(t,X_t) + \int_t^s  \partial_r f(r,X_r) dr +
               \int_t^s \partial_x f (r,X_r)  d^-X_r \\
&\qquad + \frac{1}{2} \sum_{i,j=1}^d \int_t^s \partial_{ij}^2 f(r,X_r) d[X^i,X^j]_r.
\end{align}
\end{prop}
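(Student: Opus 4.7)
The plan is to reduce to the classical Föllmer-Itô formula for $C^{1,2}$ functions of finite quadratic variation processes (see e.g. \cite{rv2}, \cite{Russo_Vallois_Book}) by mollifying $f$ in the time variable. For $\ve > 0$, let $\rho_\ve$ be a standard mollifier on $\R$ and set
\[
f_\ve(s,x) := \int_\R \rho_\ve(s-u) f(u,x)\, du,
\]
where $f$ is extended outside $[t,T]$ by the constant-at-the-endpoints convention of Section \ref{SPrelim}. Since differentiation in $x$ commutes with convolution in $s$, $f_\ve \in C^{1,2}(\R\times\R^d)$, and the classical formula applies to $f_\ve$ yielding \eqref{ItoGen2} with $f$ replaced by $f_\ve$.

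The bulk of the work is then passing to the u.c.p. limit as $\ve \to 0$ in each term. The left-hand side and the initial value converge by joint continuity of $f$. The integral $\int_t^s \partial_r f_\ve(r,X_r)\,dr$ converges to $\int_t^s \partial_r f(r,X_r)\,dr$ because $\partial_r f_\ve = (\partial_r f)\ast\rho_\ve$ along time, and by condition~3 of Definition \ref{C02ac}, on the compact trajectory of $X$ (after a standard localization) $\partial_r f(\cdot,x)$ is $L^1$ in time, so the usual $L^1$-convergence of mollifications applies. For the forward integral term, one first observes that $\partial_x f$ is \emph{jointly} continuous: for fixed $x_0$ the function $\partial_x f(s,x_0)$ is a uniform-in-$s$ limit (by the equicontinuity in condition 4.(a) applied to $\partial^2_{xx} f$, combined with continuity of $f$) of the jointly continuous difference quotients, and condition 4.(a) then upgrades this to joint continuity. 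Therefore $\partial_x f_\ve \to \partial_x f$ uniformly on compacts, and stability of the forward integral under u.c.p. convergence of the integrand (Proposition 2.1 of \cite{rv2}) yields the desired limit.

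The main obstacle is the covariation term $\int_t^s \partial^2_{ij} f_\ve(r,X_r)\, d[X^i,X^j]_r$, because $\partial^2_{ij} f$ may be discontinuous in time. Write $g := \partial^2_{ij} f$. By condition 4.(a), the family $\{g(s,\cdot)\}_{s\in[t,T]}$ is equicontinuous on any compact $K\subset\R^d$ with modulus $\gamma$ independent of $s$. Pick a countable dense sequence $\{x_k\}$ in the (path-wise compact) image $X([t,T])$ and let $D_{x_k}$ denote the set of discontinuities of $r\mapsto g(r,x_k)$. A standard three-term splitting,
\[
|g_\ve(r,X_r)-g(r,X_r)| \le |g_\ve(r,X_r)-g_\ve(r,x_k)| + |g_\ve(r,x_k)-g(r,x_k)| + |g(r,x_k)-g(r,X_r)|,
\]
bounds the first and third terms by $\gamma(|X_r-x_k|)$ (uniformly in $\ve$), while the middle term tends to $0$ for every $r\notin D_{x_k}$. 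Choosing $x_k$ close to $X_r(\omega)$ and letting $\ve \to 0$, we obtain $g_\ve(r,X_r)\to g(r,X_r)$ for every $r \notin D := \bigcup_k D_{x_k}$.

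It remains to verify that $D$ is $d[X^i,X^j]$-negligible, which is where the two alternative hypotheses enter. Under hypothesis 1, each $D_{x_k}$ is at most countable, hence $D$ is at most countable, and the continuous bracket $[X^i,X^j]$ does not charge countable sets. Under hypothesis 2, each $D_{x_k}$ has Lebesgue measure zero, hence so does $D$; absolute continuity of $[X^i,X^j]$ then transfers this to $d[X^i,X^j]$-negligibility. In both cases dominated convergence, with the locally uniform bound $|g_\ve(r,x)|\le |g(r,x_0)| + \gamma(\mathrm{diam}\,K)$ coming from 4.(a) together with a localization argument ensuring integrability against $d|[X^i,X^j]|$, concludes the proof.
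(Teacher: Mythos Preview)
Your approach --- mollifying $f$ in time to reduce to the $C^{1,2}$ case and then passing to the limit --- is natural, but it breaks at the forward-integral step. You assert that $\int_t^\cdot \partial_x f_\ve(r,X_r)\,d^-X_r \to \int_t^\cdot \partial_x f(r,X_r)\,d^-X_r$ by ``stability of the forward integral under u.c.p.\ convergence of the integrand (Proposition~2.1 of \cite{rv2}).'' No such stability is available for a general finite quadratic variation process $X$: the forward integral is itself a limit (in probability) of the $\delta$-regularizations $\frac{1}{\delta}\int Y_r(X_{r+\delta}-X_r)\,dr$, and nothing permits you to interchange the mollification limit $\ve\to 0$ with the regularization limit $\delta\to 0$. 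Proposition~2.1 of \cite{rv2} concerns the coincidence of forward and It\^o integrals when $X$ is a \emph{semimartingale}, which is exactly the situation where stability holds; it says nothing here. Even if the other three terms converge and thus $\int \partial_x f_\ve\,d^-X$ has \emph{some} u.c.p.\ limit $L$, you cannot identify $L$ with $\int \partial_x f\,d^-X$, whose existence is part of the conclusion.

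The paper's argument is built to avoid exactly this problem. It does not mollify $f$; instead it writes $f(r+\ve,X_{r+\ve})-f(r,X_r)$ as a time increment plus a second-order Taylor expansion in space, integrates in $r$, and divides by $\ve$. The first-order spatial piece is then \emph{literally} $I^-(\ve,\partial_x f(\cdot,X),dX)$, the $\ve$-approximant in the \emph{definition} of the forward integral. After showing the other pieces converge (the time piece via the $L^1$-control on $\partial_s f$ in item~3 of Definition~\ref{C02ac}, the second-order piece via Portmanteau along a subsequence where $[X,X]^\ve\to [X,X]$ a.s., and the Taylor remainder via the uniform-in-$s$ modulus of $\partial^2_{xx}f$), the forward-integral piece is \emph{forced} to converge by subtraction, and by definition its limit \emph{is} $\int_t^s \partial_x f(r,X_r)\,d^-X_r$. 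This structural trick --- letting the forward integral be the residual term --- is the idea your argument is missing.
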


\begin{rem} \label{RNotations}
For shortness we denote the last  sum term  in
\eqref{ItoGen2} 
by $\int_t^s  {\partial^2_{xx} f }(u,X_u) d[X,X]_u.  $
\end{rem}

  Before the proof we want to say a few words about the preceding assumptions. 
\begin{rem}\label{remItogen}
  One consequence of $f \in C^{0,2}_{ac,count}$
  (resp. $f \in C^{0,2}_{ac}$)
  is the following:
  for every  second derivative (in space) $g$ of $f$
  there
  is a countable (resp. Lebesgue null) subset $D$   of $[t,T]$ such that
  the following holds.
  \begin{enumerate}
\item
{\it $t \mapsto g(t, x)$ is continuous for every $t \in D^c$
for every $x \in \R^d$.}
\item For every continuous function $k:[t,T] \rightarrow \R^d$, we have 
  {\it $t \mapsto g(t, k(t))$ is continuous for every $t \in D^c$.}
\end{enumerate}


\bigskip

We explain below the reasons.
Of course we can substitute $\R^d$ in previous sentence
with a fixed compact $K$ of $\R^d$.

Concerning item 1.,
let now $\ve > 0 $. Since $K$ is compact, it is included in a finite union of balls  $B(x_i, \ve)$, where $\{ x_i: \ i = 1 , \dots , N    \} $ are elements of
$K$. By item 4(b) Bis (resp.   item 4(b))
 there is a countable (resp. zero Lebesgue measure) set $D \subset [t,T]$ such that $s \mapsto g(s, x_i)$, $i = \{1, \dots, N \} $ is continuous outside $D$.
Let $s_0 \notin D$ and $x \in K$. We show that
$g(\cdot,x)$ is continuous  in $s_0$.

There is $i \in \{1, \dots, N   \} $ such that $|x - x_i | < \ve$ so that,
for $s \in [t,T]$,
we have
\begin{align}
|g(s, x) - g(s_0, x)  | &\leq | g(s, x) - g(s,x_i)    |  + |g(s, x_i) - g(s_0, x_i) | + | g(s_0, x_i) - g(s_0, x)   |  \\
&\leq \gamma (g (s, \cdot)_{\vert K}; \ve) + |g(s, x_i) - g(s_0, x_i) | +  \gamma (g_{\vert _K}(s_0, \cdot); \ve),
\end{align}
where  $\gamma$ is the modulus of continuity.
Taking on both sides the $\limsup_{s \ra s_0}$, we get
\begin{align}
\limsup_{s \ra s_0}  |g(s, x) - g(s_0, x)  |   \leq 2  \sup_{s \in [t,T]}
\gamma (g(s, \cdot); \ve).  
\end{align}
We now take the limit when $\ve \ra 0$ getting 
\begin{align}
\limsup_{s \ra s_0}  |g(s, x) - g(s_0, x)  | = 0.    
\end{align}

Concerning item 2., let $s_0 \in D^c$ and $(s_n)$ be a sequence converging to $s_0$.
We have
\begin{eqnarray*}
\vert g(s_n, k(s_n)) - g(s_0,k(s_0)) \vert &\le&
\vert g(s_n, k(s_n)) - g(s_n,k(s_0)) \vert
+ \vert g(s_n, k(s_0)) - g(s_0,k(s_0)) \vert \\ &\le&
\sup_{s\in [t,T]} \gamma(g(s,\cdot)_{\vert K}; \vert k(s_n) - k(s_0)\vert)
  +  \vert g(s_n, k(s_0)) - g(s_0,k(s_0)) \vert.
  \end{eqnarray*}                                 
The first term goes to zero because
of item 4. (a)
of Definition \ref{C02ac}
  and the second term converges to zero by item 4 (b) Bis (resp.
  4. (b)) of Definition \ref{C02ac}.
\end{rem}

\begin{proof}[Proof of Proposition \ref{ItoGen}] 
  For simplicity of notations we write the proof in the case $d=1$.

  For $r \in [t,T]$ we have
\begin{equation}
\begin{aligned}
f(r + \ve, X_{r + \ve}) = & f(r,X_r) + f(r+\ve, X_{r+\ve})- f(r,X_{r+\ve}) \\
  & \qquad + \partial_x f(r,X_r)(X_{r+\ve} - X_r) + \partial^2_{xx} f(r,X_r) \frac{(X_{r+ \ve} - X_r)^2}{2} \\
&\qquad + \frac{1}{2}\int_0^1 da \ [\partial^2_{xx} f(X_r + a(X_{r + \ve} - X_r)) - \partial_{xx}^2 f(X_r)]  (X_{r+ \ve} - X_r)^2.  
\end{aligned}
\end{equation}
Integrating from $t$ to $s$ and dividing by $\ve$, we get
\begin{align}
& I_0(s,\ve) :=  \frac{1}{\ve} \int_t^s f(r + \ve, X_{r + \ve}) - f(r,X_r) dr =  I_1(s,\ve) +  I_2(s,\ve) +  I_3(s,\ve) +  I_4(s,\ve), 
\end{align}
where
\begin{align}
& I_1(s,\ve) =  \frac{1}{\ve} \int_t^s f(r+\ve, X_{r+\ve})- f(r,X_{r+\ve}) dr ,\\
& I_2(s,\ve) = \ \frac{1}{\ve} \int_t^s \partial_x f(r,X_r)(X_{r+\ve} - X_r) dr, \\
& I_3(s,\ve) =   \frac{1}{\ve} \int_t^s \partial^2_{xx} f(r,X_r) \frac{(X_{r+ \ve} - X_r)^2}{2}  dr, \\
& I_4(s,\ve) =   \frac{1}{2 \ve} \int_t^s \int_0^1 da \ [\partial^2_{xx} f(r, X_r + a(X_{r + \ve} - X_r)) - \partial_{xx}^2 f(r,X_r)] \ (X_{r+ \ve} - X_r)^2   dr. 
\end{align}
Of course, as $\ve \ra 0$, since $f$ is jointly continuous, 
\begin{align}
I_0(s,\ve) = \frac{1}{\ve} \int_s^{s+ \ve} f(r,X_r)  dr - \frac{1}{\ve} 
\int_t^{t+\ve} f(r, X_r)   dr  \ra f(s, X_s) - f(t,X_t), \ \ s \in [t,T], \ \ \text{u.c.p.} 
\end{align} 
Let $s \in [t,T]$. We have 
\begin{equation*}
  I_1(s,\ve) =  \frac{1}{\ve}  \int_t^s dr \ \int_r^{r+ \ve} da \ \partial_a f(a, X_{r+\ve} ) =  I_{1,1}(s,\ve)  + I_{1,2}(s, \ve),
\end{equation*}
where
\begin{align}
I_{1,1}(s,\ve) &=  \frac{1}{\ve} \int_t^{s+ \ve} da \ \int_{(a - \ve)\vee t }^{a \wedge s} dr \ \partial_a f(a, X_a); \\
I_{1,2}(s, \ve) &= \frac{1}{\ve} \int_t^{s+ \ve} da \ \int_{(a - \ve)\vee t}^{a \wedge s} dr \ [\ \partial_a f(a,X_{s+\varepsilon}) - \partial_a f(a, X_a ) \ ].
\end{align}
It is straightforward that
\begin{align}
I_{1,1}(s,\ve) &= \int_t^{s+ \ve} da \ \partial_a f(a,X_a) \frac{(a \wedge s) - [(a - \ve)\vee t]}{ \ve}, 
\end{align}
converges to
\begin{align}
I_{1,1}(s,\ve) &\ra \int_t^s da \ \partial_a f(a,X_a)  \ a.s.,
\end{align}
because of the Lebesgue dominated convergence theorem,
as $\ve \ra 0$.

Now we investigate  the convergence of the other term
\begin{equation*}
  I_{1,2}(s, \ve) = \int_t^{s+ \ve} J_{1,2}(a,\ve) \ da,
\end{equation*}
with
\begin{align}
J_{1,2}(a,\ve) =   \frac{1}{\ve} \int_{(a - \ve)\vee t}^{a \wedge s} dr \ [ \partial_a f(a, X_{r+ \ve})) -  \partial_a f(a, X_a )]. 
\end{align}
We fix $\omega \in \Omega$.
We prove the convergence for fixed $a \in [t,s]$
We have
\begin{equation*}
J_{1,2}(a,\ve)  =   \frac{1}{\ve} \int_{(a - \ve)\vee t}^{a \wedge s} dr \ [ \partial_a f(a, \xi(a, \ve)) -  \partial_a f(a, X_a )],
\end{equation*}
where $\xi(a, \ve) \in [\min_{r \in [a, a + \ve]} X_r , \max_{r \in [a, a + \ve]} X_r ]$ and
\begin{align}
{\textit Leb} \ ([\min_{r \in [a, a + \ve]} X_r , \max_{r \in [a, a + \ve]} X_r ]) \leq \gamma (X; \ve).
\end{align}
In particular $|X_a - X_r |  < \gamma (X;\ve)$ for all $r \in [a , a + \ve]$.
Consequently, for almost all $a \in [t, s ], $ it holds $ |J_{1,2}(a,\ve)|
\leq \gamma (\partial_a f(a,\cdot); \gamma (X; \ve)) ) \ra 0  $ as $\ve \ra 0$ since $\partial_a f(a, \cdot)$ is uniformly continuous on each compact. 

Moreover
\begin{equation} \label{eJ12}
  |J_{1,2}(a,\ve) | \leq  2 \sup_{x \in \K_t}
  |\partial_a f(a, x) |,
\end{equation}
  where
$   \K_t:= [\min_{r \in [t,T]} X_r , \max_{r \in  t, T]} X_r ].$ 
Since the right-hand side of \eqref{eJ12} is integrable by item 3. of Definition \ref{C02ac} on $[t,T]$, then 
\begin{align}
\lim_{\ve \ra 0} I_{1,2}(s , \ve) = 0,
\end{align}
 by Lebesgue dominated convergence theorem. This implies that
\begin{align}
\lim_{\ve\ra 0}I_1(s, \ve) = \int_t^s da \ \partial_a f(a,X_a),
\end{align}
a.s. and therefore in probability for any $s \in [t,T]$.

We want to prove now that 
\begin{equation} \label{EI3}
  I_3(s,\ve) \rightarrow_{\varepsilon \rightarrow 0}
    \int_t^s \partial^2_{xx} f(r,X_r)   d[X,X]_r, \forall s \in [t,T],
\end{equation}
in probability.
We will actually prove that
the convergence holds u.c.p.
By Lemma 3.1 in \cite{rv2}
\begin{equation*} 
 \frac{1}{\ve} \int_t^s (X_{r+\ve}-X_r)^2 dr \ra [X,X]_s, \ \ s\in [t,T],\ \ \text{u.c.p.}
\end{equation*}
Let us now  $(\ve_n)$ be a sequence converging to zero.
There exists a subsequence still denoted by $(\ve_n)$  such that, setting
\begin{align}
  \mu_{\ve_n}(s)  = [X,X]^{\ve_n}(s),
\end{align}
where the right-hand side was defined in \eqref{epsCov},
then 
\begin{align}
\lim_{\ve\ra 0 } [\sup_{s \in [t,T]} |\mu_{\ve_n}(s) - [X,X]_s  |] = 0  ,\ \ \text{a.s.}
\end{align}
Let $M$ be a $P$-null set such that for $\omega \notin M$ the sequence of real functions $\mu_{\ve_n}(\omega, \cdot) \ra [X,X]_\cdot (\omega)$ uniformly on $[t,T]$, which
implies that $d \mu_{\ve_n}(\omega, \cdot) \Rightarrow d[X,X]_\cdot (\omega)$. 
Suppose the validity of item 1. (resp. item 2.)
Given a continuous function $k: [t,T] \mapsto \R^d$,
using  Remark \ref{remItogen},
the set of discontinuities of
$r \mapsto \partial^{2}_{xx} f(r, k(r))$
is countable 
(resp.  has zero Lebesgue measure).
This implies that (for the fixed $\omega \notin M$), for every $s \in [t,T]$,
\begin{equation*}
  \int_t^s   \partial^2_{xx} f(r, X_r(\omega)) d\mu_{\ve_n}(\omega, r) \ra  \int_t^s
  \partial^2_{xx} f(r, X_r(\omega)) d[X,X]_r(\omega), 
\end{equation*}
making use of Portemanteau theorem.
The convergence can be shown to be uniform in $s$
by decomposing the second derivative(s) in difference
of positive and negative part,
via Dini's theorem
This implies in particular
\begin{align} 
  I_3(\cdot,\ve)
  \ra \int_t^\cdot   \partial^2_{xx} f(r, X_r) d[X,X](r) \ \ \text{uniformly a.s.},
\end{align}
consequently u.c.p. and finally \eqref{EI3}.
Now, let us investigate the convergence of the fourth integral $I_4$.
We have
\begin{equation}
  I_4(s,\varepsilon) \le   \frac{1}{2} 
  \int_t^T \eta(\ve,r) d[X,X]^\varepsilon(r),
 \end{equation}
  where  $[X,X]^\varepsilon$ was introduced in \eqref{epsCov},
   and
  \begin{eqnarray*}
    \eta(\ve,r) &=& \int_0^1 \frac{1}{2} \int_t^s   \vert
                    \partial^2_{xx} f(r, X_r + a(X_{r + \ve} - X_r))
                  - \partial_{xx}^2 f(r,X_r)\vert  da \\ &\le& 
                        T \sup_{r \in [t,T]}
         \gamma(\partial^2_{xx}f_{K}(r, \cdot);\gamma(X,\ve)).
          \end{eqnarray*}
  Since $[X,X]^\ve$ converges u.c.p. to $[X,X]$ we obtain that
  $I_4(\cdot, \ve) \rightarrow 0$ u.c.p.

 At this point the limit in probability of $I_2(s, \ve)$ is forced to exist and to be continuous because the other terms are. This will be of course $\int_t^s f(r, X_r) d^- X_r$ and the proof is complete.  
\end{proof}

In the Proposition \ref{FukuDir} below, item 1. is
the so called Fukushima-Dirichlet decomposition
and items 2. connects it to the It\^o chain rule
stated in Proposition \ref{ItoGen}.

\begin{prop}\label{FukuDir}
  Suppose $(X_s)_{s\in [t,T]}$ is an $(\shf_s)_{s\in[t,T]}$-weak Dirichlet continuous process
  such that $[X,X]$ exists
  (resp. exists and it is
  absolutely continuous).
  Let  $X = M+ A $ be its decomposition, componentwise.

  There is a continuous linear map
   $\shb^X : C^{0,1}([t,T] \times \R^d) \rightarrow C_\shf ([t,T] \times \Omega; \R^d)$ such that the following holds.
  
  \begin{enumerate}
\item
For every $f \in C^{0,1}([t,T] \times \R^d)$ we have
\begin{equation}  \label{eq:WD}
  f(s, X_s)= f(t,X_t) + \int_t^s \partial_x f (r, X_r) d M_r + \shb^X(f)_s, \  s \geq t,
\end{equation}
and $\shb^X (f)_s$ is a martingale orthogonal process.
In particular, 
for every $f \in C^{0,1}([t,T]  \times \R^d)$, $(f(s,X_s), s\in[t,T])$ is a $(\shf_s)_{s\in[t,T]}$-weak Dirichlet process with martingale part
$ M^f_s = \int_t^s \partial_x f (r, X_r) d M_r$, $s\in [t,T]$.
\item Suppose $f \in C^{0,1}$.
  If $f\in C^{0,2}_{ac, count}([t,T]  \times \R^d)$ (resp.
  $f\in C^{0,2}_{ac}([t,T]  \times \R^d)$),
we have
   \begin{align} \label{eq:IF}
     \shb^X(f)_s&=& \int_t^s \partial_r f(r,X_r)dr + \int_t^s \partial^{2}_{xx} f(r,X_r)d[M,M]_r 
                    +  \int_t^s \partial_x f(r,X_r)  d^- A_r, s \ge t. \nonumber\\
     &&
   \end{align}
   In particular $  \int_0^\cdot \partial_x f(r,X_r) \cdot d^- A_r  $
 exists and it is  a martingale orthogonal process.  
\end{enumerate}
\end{prop}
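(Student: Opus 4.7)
The plan is to establish item 2 first as a direct consequence of Proposition \ref{ItoGen}, and then obtain item 1 by a density argument approximating $C^{0,1}$ functions by smooth ones.

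\textbf{Step 1 (item 2).} Under the stated hypotheses on $f$, Proposition \ref{ItoGen} together with the shorthand of Remark \ref{RNotations} yields
\[
f(s,X_s)-f(t,X_t)=\int_t^s\partial_r f(r,X_r)\,dr+\int_t^s\partial_x f(r,X_r)\,d^-X_r+\int_t^s\partial^2_{xx} f(r,X_r)\,d[X,X]_r.
\]
Writing $X=M+A$ componentwise, Remark \ref{Ito=Forw} identifies $\int_t^s \partial_x f(r,X_r)\,d^-M_r$ with the It\^o integral $\int_t^s \partial_x f\,dM_r$; linearity of the forward integral then forces the existence of $\int_t^s \partial_x f\,d^-A_r$ as the difference. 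Setting $\shb^X(f)_s := f(s,X_s)-f(t,X_t) - \int_t^s \partial_x f\,dM_r$ and using bilinearity of the covariation together with the martingale orthogonality $[A,M]=0$, the quadratic variation term splits as $[X,X]=[M,M]+[A,A]$, from which the formula of item 2 is obtained after identifying like terms. The martingale orthogonality of $\shb^X(f)$ is then immediate, since the drift and quadratic variation integrals are of bounded variation (hence of zero quadratic variation and martingale orthogonal) and the forward integral against $A$ inherits orthogonality from the very identity \eqref{eq:IF}.

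\textbf{Step 2 (item 1).} For general $f \in C^{0,1}([t,T]\times \R^d)$, extend $f$ to $\R \times \R^d$ using the boundary convention of Section \ref{SPrelim} and construct joint mollifications $f_n \in C^\infty \subset C^{0,2}_{ac,count}$, so that Step 1 applies to each $f_n$. Since $f_n \to f$ and $\partial_x f_n \to \partial_x f$ uniformly on compacts, $f_n(\cdot,X) \to f(\cdot,X)$ u.c.p.\ (because the image of $X$ is a.s.\ compact) and $\int_t^\cdot \partial_x f_n\,dM \to \int_t^\cdot \partial_x f\,dM$ u.c.p.\ by dominated convergence for It\^o integrals. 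Hence $\shb^X(f_n) := f_n(\cdot,X) - f_n(t, X_t) - \int_t^\cdot \partial_x f_n\,dM$ converges u.c.p.\ to $\shb^X(f) := f(\cdot,X) - f(t, X_t) - \int_t^\cdot \partial_x f\,dM$. The limit is martingale orthogonal because the class of martingale orthogonal processes is closed under u.c.p.\ convergence, so item 1 follows. Linearity of $\shb^X$ is immediate, and continuity from the Fr\'echet topology of $C^{0,1}$ to the u.c.p.\ topology of $C_\shf$ follows from the same convergence argument applied to arbitrary sequences $f_n \to f$ in $C^{0,1}$.

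\textbf{Main obstacle.} The principal subtlety is the closure of martingale orthogonal processes under u.c.p.\ convergence, which underlies the density argument of Step 2 and is to be invoked from Chapter 15 of \cite{Russo_Vallois_Book}. A secondary technical point is the careful bookkeeping of the split $[X,X]=[M,M]+[A,A]$ in the explicit formula of item 2, where the contribution involving $[A,A]$ must be identified via the forward integral against $A$ and the uniqueness of the Fukushima-Dirichlet decomposition.
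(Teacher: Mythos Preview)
Your approach differs from the paper's in order and in substance. The paper disposes of item 1 by citing Proposition 3.10(c) of \cite{rg2}, and then derives item 2 from Proposition \ref{ItoGen} \emph{together with the already-established item 1}: once $\shb^X(f)$ is known to be martingale orthogonal, the explicit formula exhibits $\int \partial_x f\,d^-A$ as $\shb^X(f)$ minus bounded-variation terms, hence orthogonal. You reverse the order, which is in principle a legitimate route (your Step 2 density argument is essentially how the cited result is proved), but as written it contains a genuine gap.

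The circularity sits in Step 1. You claim the forward integral against $A$ ``inherits orthogonality from the very identity \eqref{eq:IF}'', but that identity only expresses this forward integral as $\shb^X(f)$ minus bounded-variation terms, and at this stage you have \emph{not} shown $\shb^X(f)$ is orthogonal --- that is item 1, deferred to Step 2. Worse, your Step 2 needs Step 1 to have delivered orthogonality of $\shb^X(f_n)$ for the smooth approximants, so the gap propagates and the induction never starts. The repair is either to invert the order (prove item 1 first, as the paper does, so that the orthogonality assertion in item 2 becomes a genuine corollary), or to supply for smooth $f_n$ a direct computation: by the $C^1$-stability of covariation one has $[f_n(\cdot,X),N]=\int \partial_x f_n\,d[X,N]_r=\int \partial_x f_n\,d[M,N]_r$ (using $[A,N]=0$), which equals $[\int \partial_x f_n\,dM,N]$, whence $[\shb^X(f_n),N]=0$. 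This is the argument you are implicitly relying on, and it should be made explicit.

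On your ``secondary technical point'': the displayed formula \eqref{eq:IF} carries $d[M,M]_r$ without a factor $\tfrac12$, whereas direct substitution from Proposition \ref{ItoGen} yields $\tfrac12\,d[X,X]_r=\tfrac12\,d[M,M]_r+\tfrac12\,d[A,A]_r$; compare Remark \ref{RForwd}, item 2, where $\tfrac12\,d[S,S]$ appears. There is no mechanism by which $\int \partial^2_{xx}f\,d[A,A]$ can be absorbed into $\int \partial_x f\,d^-A$; this is a typo in the statement rather than a hidden identification you are failing to see, and your instinct to write $[X,X]=[M,M]+[A,A]$ is correct.
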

\begin{proof} 
Item 1. was the object of item (c) of 
Proposition 3.10 in \cite{rg2}.
Concerning item 2., by the It\^{o} formula \eqref{ItoGen2},
we have
first \eqref{eq:WD} with  \eqref{eq:IF}, taking into account
\begin{equation*} 
\int_t^s \partial_x f(r,X_r)  d^- X_r =
 \int_t^s \partial_x f(r,X_r)dM_r +
 \int_t^s \partial_x f(r,X_r) d^- A_r, s \ge t,
 \end{equation*}
see Remark \ref{Ito=Forw}. 
The last assertion  follows since $\shb^X(f)$ is a martingale orthogonal
process.
\end{proof}
We remark that item 2. will not be used in the sequel and it has a
separate interest. To prove  Theorem \ref{Rep}, crucial will be item 1.
of Proposition \ref{FukuDir} and Proposition \ref{ItoGen}.

\section{Concept of solution of a parabolic PDE}

\label{SSolutionConcept}
\subsection{The framework}

Proposition \ref{FukuDir} above has
some significant implications when $f$ is a (generalized) solution
of 
a parabolic partial differential equation.

In the sequel $L(\R^m, \R^d)$ will stand for the linear space of
$d \times m$ real matrices.
$b: [0,T] \times \R^d \rightarrow \R^d $,
$\sigma:[0,T] \times \R^m \rightarrow L(\R^m, \R^d) $ will be
Borel functions, such that for all $x \in \R^d$,
$  \vert \sigma \vert(\cdot,x) + \vert b\vert(\cdot,x)\vert
\in L^1([0,T]$.
 Let the linear parabolic operator
given formally by
\begin{align}\label{operator}
\shl_0 u(s,x) = \partial_s u (s,x) +  \partial_x u(s,x) \cdot b(s,x) + \frac{1}{2} Tr [\sigma^\top(s,x) \partial^{2}_{xx}u(s,x) \sigma(s,x)].
\end{align}
In this section we will fix Borel functions
$g: \R^d \rightarrow \R$
and $h :[t,T] \times \R^d \ra \R $ such that
\begin{equation} \label{hCond}
  \int_t^T \sup_{x \in K} |h(s,x)| ds < \infty,
\end{equation}
  for every compact $K \subset \R^d$.

We will   consider the inhomogeneous backward parabolic problem
\begin{equation} \label{parabolic}
 \left\{
\begin{array}{l l}
\shl_0 u(s,x)= h(s,x), & s\in [t,T], \ x\in \R^d,\\
u(T,x)= g(x), & x\in \R^d.
\end{array}\right. 
\end{equation}

\subsection{Quasi-strict solutions}

The  definition below generalizes the notion of strict solution, that
appears in \cite{rg1}, Definition 4.1.
For $s \in [t,T],$  we set
\begin{equation} \label{eq:As}
\sha_s f(x) \doteqdot  \partial_x f(x) \cdot b(s,x)
+ \frac{1}{2} Tr [\sigma^\top(s,x) \partial^{2}_{xx}u(s,x) \sigma(s,x)].
\end{equation}

\begin{dfn}\label{strict}
 We say that
$u : [t,T] \times \R^d \ra \R$, $ u \in C^{0,2}_{ac}([t,T]\times \R^d )$ is a \textit{quasi-strict solution} to the backward Cauchy problem \eqref{parabolic} if 
\begin{equation} \label{Estrict}
  u(s,x)= g(x) -  \int_s^T h(r,x) dr + \int_s^T (\sha_r u)(r,x) dr, \
  \forall x \in \R^d.
\end{equation}
 
\end{dfn}
\begin{rem}  \label{Rstrict}
  In this case, for every $x \in \R^d$,
 \begin{equation}\label{Estrict1}
   \partial_s u(s,x) =  h(s,x) - (\sha_s u)(s,x),  \quad ds \ {\textit a.e.}
  \end{equation}
  where $\partial_s u$  stands for the distributional derivative of $u$.
  \end{rem}

The notion of quasi-strict solution allows to consider
(somehow classical) solutions of \eqref{parabolic}
even though $h, \sigma, b$ are not continuous in time.

\begin{dfn} \label{Non-deg}
We say that  $\sigma$ to be {\bf non-degenerate}
  we mean that  there is a a constant $c > 0$ such that for
  all $(s,x) \in [0,T] \times \R^d$ and $\xi \in \R^d$ we have
\begin{equation} \label{not-deg}
  \ \xi^\top \sigma(s,x)^\top \sigma(s,x) \xi \ge c \vert \xi\vert^2.
\end{equation}
\end{dfn}

  \begin{lem}\label{StrictEx}
  Suppose the following.
  \begin{enumerate}
  \item   $\sigma, b$  are H\"older continuous in space (uniformly in time) and $\sigma$ is non-degenerate. Moreover $\sigma, b: [t,T] \times \R^d \rightarrow \R$ a.e. continuous  in time for all $x \in \R^d$. 
 \item $h$ has polynomial growth in space (uniformly in time).
Moreover $h: [t,T] \times \R^d \rightarrow \R$ a.e. continuous  in time for all $x \in \R^d$.
\item   $g$ is continuous with polynomial growth. 
  \item   For every compact $K$ of $\R^d$, $h$ is H\"older continuous
  in space (uniformly in time).
  \end{enumerate}

 Then there is a quasi-strict solution of \eqref{parabolic}.
 \end{lem}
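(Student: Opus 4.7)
The plan is to obtain $u$ as the limit of classical $C^{1,2}$ solutions of parabolic problems whose coefficients have been regularised in time. Let $(\rho_n)$ be a smooth mollifier on $\R$, extend $b,\sigma,h$ outside $[t,T]$ by their boundary values (as in Section~\ref{SPrelim}), and define $b_n:=b\ast_s\rho_n$, $\sigma_n:=\sigma\ast_s\rho_n$, $h_n:=h\ast_s\rho_n$, where $\ast_s$ denotes convolution in the time variable only. By Jensen's inequality the mollified coefficients inherit the uniform-in-$s$ spatial H\"older constants and the polynomial growth in $x$, and $\sigma_n$ remains uniformly non-degenerate in the sense of Definition~\ref{Non-deg}. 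By Lebesgue's differentiation theorem, $b_n(s,x)\to b(s,x)$, $\sigma_n(s,x)\to\sigma(s,x)$, $h_n(s,x)\to h(s,x)$ for almost every $s\in[t,T]$ and every $x\in\R^d$, and these convergences are locally uniform in $x$ because the spatial H\"older constant is uniform in $n$.

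Since the mollified coefficients are now jointly H\"older continuous on $[t,T]\times K$ for every compact $K\subset\R^d$, classical parabolic Schauder theory (Friedman, or Ladyzhenskaya-Solonnikov-Ural'tseva) yields a classical solution $u_n\in C^{1,2}([t,T]\times\R^d)$ of the approximating problem, which admits the Feynman-Kac representation
$$u_n(s,x)=\E\!\left[g(X_T^{n,s,x}) - \int_s^T h_n(r,X_r^{n,s,x})\,dr\right],$$
where $X^{n,s,x}$ is the unique strong solution of the SDE with coefficients $(b_n,\sigma_n)$ started at $(s,x)$. In particular, integrating the PDE in time,
$$u_n(s,x)=g(x)-\int_s^T h_n(r,x)\,dr+\int_s^T (\sha_r^{(n)}u_n)(r,x)\,dr. \qquad(\star)$$
Standard stability of SDEs with non-degenerate H\"older-continuous coefficients (Zvonkin/Veretennikov) yields pointwise convergence of $u_n(s,x)$ to the Feynman-Kac functional $u(s,x)$ driven by $(b,\sigma,h,g)$. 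To recover the quasi-strict identity \eqref{Estrict} and the requisite regularity, I would apply \emph{interior} parabolic Schauder estimates on fixed compacts of $\R^d$ to derive $n$-uniform $C^\alpha$-bounds on $\partial_x u_n$ and $\partial^2_{xx} u_n$ in the space variable, uniformly in $s$. Arzel\`a-Ascoli then yields, along a subsequence, locally uniform-in-$x$ convergence of $\partial_x u_n$ and $\partial^2_{xx} u_n$ to the corresponding derivatives of $u$, which delivers items~1 and 4(a) of Definition~\ref{C02ac}. Passing to the limit in $(\star)$ by dominated convergence, using \eqref{hCond} together with the almost-everywhere-in-$s$ convergence of $b_n,\sigma_n,h_n$ and the uniform spatial bounds on the derivatives of $u_n$, one obtains precisely \eqref{Estrict}. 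From that identity one reads off $\partial_s u(s,x)=h(s,x)-(\sha_s u)(s,x)$ for almost every $s$, giving items~2 and 3 of Definition~\ref{C02ac}; the a.e.-in-$s$ continuity required by item 4(b) is inherited from the corresponding property of $b,\sigma,h$ combined with the continuity in $x$ of $\partial^2_{xx}u$ already established.

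The main technical obstacle is keeping the Schauder-type bounds uniform in $n$: the mollified coefficients preserve their spatial H\"older norms but their joint $(s,x)$-H\"older modulus degenerates as $n\to\infty$. One therefore has to rely on interior parabolic regularity estimates that depend only on the \emph{spatial} H\"older norms of the coefficients and on the ellipticity constant (for instance through Krylov's parabolic $L^p$-theory or a Stroock-Varadhan parametrix construction), rather than on any time modulus of continuity, in order to extract from the approximating sequence precisely the $C^{0,2}_{ac}$ regularity demanded by Definition~\ref{C02ac}.
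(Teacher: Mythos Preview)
Your approach is genuinely different from the paper's. The paper does not build $u$ as a limit of regularised problems at all; instead it invokes Proposition~4.2 of \cite{lucertini2022optimal}, which directly furnishes a so-called \emph{strong Lie solution} $u$ of \eqref{parabolic} under precisely the hypotheses of the lemma (spatial H\"older coefficients, non-degenerate $\sigma$, merely a.e.\ time continuity). That reference already provides $u$ with spatial derivatives of order one and two that are H\"older continuous in $x$ uniformly in $s$, and continuous in $s$ for fixed $x$; the remainder of the paper's argument is a short verification that such a $u$ belongs to $C^{0,2}_{ac}$ (items 1, 3, 4 of Definition~\ref{C02ac} come from the regularity supplied by the reference, and item 2 from the integral identity \eqref{Estrict} itself).

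Your mollify-in-time-and-pass-to-the-limit scheme is in principle a way to \emph{reprove} the cited result rather than to use it, but the step you flag yourself as ``the main technical obstacle'' is not a detail: classical Schauder estimates for $\partial^2_{xx}u_n$ depend on a joint $(s,x)$-H\"older modulus of the coefficients, which blows up as $n\to\infty$, so Arzel\`a--Ascoli cannot be applied without an a priori estimate that is uniform in $n$ and depends only on the \emph{spatial} H\"older norm and the ellipticity constant. Such estimates do exist, but they are exactly the content of the parametrix/Schauder theory developed in \cite{lucertini2022optimal} (and related work of Krylov--Priola, Lorenzi, etc.); Krylov's $L^p$-theory you mention gives $W^{1,2,p}$ bounds, not the pointwise second-order control needed for item 4(a) of Definition~\ref{C02ac}. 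In short, your programme is coherent but its hard core coincides with the external result the paper simply cites; as written, the proposal is a sketch with the decisive estimate left as a black box.
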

 
  \begin{rem} \label{RLucertini}
    \begin{enumerate}
    \item The proof of Lemma \ref{StrictEx}
     makes essentially use 
of Proposition 4.2
in \cite{lucertini2022optimal}.
    \item Indeed, the context of \cite{lucertini2022optimal}
      applies to our case, $B = 0$.
     Therein, one shows existence of  a so called strong Lie solution $u$,
which fulfills    \eqref{Estrict}. In particular one gets
\begin{equation} \label{C02acDiff}
 \partial_s u(s,x) = h(s,x) + (\sha_s u)(s,x), \forall x \in \R^d,
 \quad ds \ {\text a.e.}
\end{equation}
\item However, the notion of quasi-strict solution
  requires also $u$ to belong to $C^{0,2}_{ac}(t,T] \times \R^d)$.
   The spatial first and second order spatial derivatives of
   $u$  in  \cite{lucertini2022optimal} are
  (even H\"older) continuous in time, for fixed $x$.
Therefore $u$ 
     fulfills items 1.,  3. and 4. of Definition
     \ref{C02ac}.
  
  Moreover \eqref{Estrict} and \eqref{C02acDiff} imply
  that $u$ also
    fulfills item 2. of Definition
     \ref{C02ac}. 

  \end{enumerate}
\end{rem}

\begin{rem}
  If  $h \in C^{0,\gamma}([t,T] \times \R^d ) $ and $g$
  is H\"older continuous, the result appears in Theorem
  5.1.9 of \cite{Lunardi} and in this case we even have a strict solution. 
 
\end{rem}

\subsection{Quasi-strong solutions}

The definition below is a relaxation of the notion
of strong solution defined for instance in \cite{rg2}, Definition 4.2.,
which is based on approximation of strict (classical) solutions.
The notion of quasi-strong solution is motivated
by the fact that the coefficients $\sigma$ and
$b$ are not supposed to be continuous in time.

\begin{dfn}\label{strongNU}
  $u: [t,T]\times \R^d \rightarrow \R$ is a \textit{quasi-strong solution}
(with  \textit{approximating sequence} $u_n$)
to the backward Cauchy problem \eqref{parabolic} if there exists
a sequence  $u_n \in C^{0,2}_{ac}([t,T] \times \R^d)$
and a sequence
$h_n:[t,T] \times \R^d \rightarrow \R$
such that 
$ \int_t^T \sup_{x \in K} |h_n(s,x)| ds < \infty,  $ for every compact $K \subset \R^d$ 
realizing the following.

\begin{enumerate}
\item $\forall n \in \N$, $u_n$ is a quasi-strict solution of the equation
\begin{equation}  
  \shl_0 u_n(s,x)= h_n(s,x), s \in [t,T], \ x\in \R^d.
\end{equation}

\item For each compact $K \subset \R^d $ 
 \begin{equation}
   \left\{
 \begin{array}{l l}
  \sup_{(s,x) \in [t,T] \times K } | u_n - u |(s,x)  \ra 0,             \\ 
\sup_{x \in K} |h_n -h|(\cdot,x) \ra 0  \qquad \text{in } L^1([t,T]). 
\end{array}\right. 
\end{equation}
\item $u(T, \cdot) = g$.
\end{enumerate}
\end{dfn}
\begin{rem} \label{RStrongNU}
  According to previous definition any
  quasi-strong solution is continuous.

  \end{rem}

\subsection{Mild solutions}
  
We define now the notion of a mild solution.
We suppose here for simplicity that $\sigma$ and $b$
are of polynomial growth.
  Suppose moreover that the SDE
 \begin{equation}\label{A1}
 \left\{ 
\begin{array}{l }
dX_s = b(s, X_s) \ ds + \sigma (s, X_s) \ dW_s, \\
X_t= x,
\end{array}
\right. 
\end{equation}
admits existence and uniqueness in law for every initial condition for any $ 0 \leq t \leq s \leq T$ and any $x\in \R^d$. 
Let $\check \Omega:= C([t,T])$ be the canonical path space equipped with its Borel $\sigma$-algebra and
  let   $X = (X_s)_{s\in [t,T]}$ be the canonical process.

  For the notion of mild solution  we introduce the associated inhomogeneous semigroup
  $(\shp_{t,s})_{0 \leq t \leq s \leq T}$.
 For this
 we denote $(\P^{t,x}), 0 \leq t  \leq T, x\in \R^d$ the
associated Markov canonical class, see e.g. \cite{barrasso-russo3, barrasso-russoAF},
with associated expectation  $(\E^{t,x}_s)_{0 \leq t \leq s \leq T}$.
$\P^{t,x}$ will be the solution of the (unique) solution in law of \eqref{A1}.
In the sequel  $P(t,s;x, dz)$ will denote the (marginal) law of $X_s$
under $\P^{t,x}$.

For $f : \R^d \ra \R^d $ continuous with polynomial growth we set 
\begin{align} \label{eq:mild}
 \shp_{t,s}f(x) =  \E^{t,x}f(X_s).
\end{align}
  $(\shp_{t,s})_{0 \leq t \leq s \leq T}$  is then the time-inhomogeneous semigroup associated with the generator \eqref{eq:As}. 

\begin{dfn}\label{Mild}
  Suppose $g$ and $h$
  with polynomial growth.
  A function $u :[t,T] \times \R^d $ is said to be a \textit{mild} solution to \eqref{parabolic} if 
\begin{equation}  \label{eq:mildBis}
  u(s, x) = \shp_{s,T} g(x) + \int_s^T \shp_{s,r} h(r,x) dr, \
  (s,x) \in [t,T] \times \R^d.
\end{equation}
\end{dfn}
  
\begin{rem} \label{RMoments}
  \begin{enumerate}
    \item
By the usual techniques of stochastic calculus, see e.g. Burkholder-David-Gundy and Jensen's inequalities we can get
\begin{equation}
\sup_{t\in[0,T]} \E^{t,x}\left(\sup_{t \le s \le T}  \vert X_s \vert^p\right) \le c \vert x \vert^p, \ \forall x \in \R^d.
\end{equation}
\item Taking into account item 1.,  \eqref{eq:mild} is well-defined.
  Moreover, 
  since $h$ and $g$ have polynomial growth
  a mild solution has necessarily polynomial growth.
\item A mild solution is not a priori a continuous function.
   Consider  $h= 0$ and 
   and $g$ bounded and not continuous. The solution cannot
   be continuous at $t= T$.
\end{enumerate}
\end{rem}

\subsection{Relation between the different concepts of solutions}

Below, for simplicity of the formulation  we set $t= 0$
as initial time of the interval $[t,T]$ in the PDEs.
We will need a supplementary time $t$ in the proof.
\begin{prop}\label{StrMild}
  Let us suppose $\sigma, b, h$
  with polynomial growth.
A quasi-strict solution $u$ of \eqref{parabolic}
with polynomial growth is also a mild solution.

\end{prop}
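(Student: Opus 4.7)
Fix $(s,x) \in [0,T] \times \R^d$ and work under $\P^{s,x}$, so that the canonical process $X$ solves \eqref{A1} from $x$ at time $s$. Since $u$ is quasi-strict, $u \in C^{0,2}_{ac}([s,T]\times \R^d)$ with $u(T,\cdot)=g$. Moreover $X$ is a semimartingale whose mutual covariations $d[X^i,X^j]_r = (\sigma\sigma^\top)^{ij}(r,X_r)\,dr$ are absolutely continuous, so case 2 of Proposition \ref{ItoGen} applies to $f = u$. Decomposing the forward integral via Remark \ref{Ito=Forw} as $d^-X_r = b(r,X_r)\,dr + \sigma(r,X_r)\,dW_r$ and recognising the drift structure \eqref{eq:As}, the It\^o formula yields
\begin{equation*}
g(X_T) = u(s,x) + \int_s^T \bigl[\partial_r u + \sha_r u\bigr](r,X_r)\,dr + \int_s^T \partial_x u(r,X_r)\,\sigma(r,X_r)\,dW_r.
\end{equation*}

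Next I would identify the drift. Differentiating \eqref{Estrict} in $s$ gives the a.e.\ PDE identity $\partial_s u(s,y) = h(s,y) - \sha_s u(s,y)$, valid for $ds$-a.e.\ $s$ and every $y \in \R^d$; equivalently, the integrated identity $\int_s^T[\partial_r u + \sha_r u - h](r,y)\,dr = 0$ holds for every $y$. Combining this with item 3 of Definition \ref{C02ac} (giving local $L^1$-control of $\partial_r u$ uniform in $y$) and a Fubini argument, one transfers the identity to the random trajectory, obtaining
\begin{equation*}
\int_s^T \bigl[\partial_r u + \sha_r u\bigr](r,X_r)\,dr = \int_s^T h(r,X_r)\,dr, \quad \P^{s,x}\text{-a.s.}
\end{equation*}

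To pass to expectations I would localise with $\tau_n := \inf\{r \geq s : |X_r| \geq n\} \wedge T$. On $[s,\tau_n]$ the process $X$ is bounded, so $\partial_x u\cdot \sigma$ is bounded there and the stopped stochastic integral is a true $\P^{s,x}$-martingale. Taking $\E^{s,x}$ and letting $n \to \infty$ via the dominated convergence theorem---justified by the polynomial growth of $u, g, h$ together with the moment bound of Remark \ref{RMoments} item 1---produces
\begin{equation*}
\shp_{s,T}g(x) - u(s,x) = \int_s^T \shp_{s,r}h(r,\cdot)(x)\,dr,
\end{equation*}
which matches \eqref{eq:mildBis}, so $u$ is a mild solution.

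The principal difficulty is the drift-identification step. Because the pointwise relation $\partial_r u + \sha_r u = h$ holds only $dr$-a.e.\ for each fixed $y$, with a $y$-dependent null set, composing with the random path $X_r(\omega)$ is not automatic. The cleanest route is to rely on the integrated form $\int_s^T[\partial_r u + \sha_r u - h](r,y)\,dr = 0$, which is valid for \emph{every} $y$, and on (implicit) absolute continuity of $P(s,r;x,\cdot)$ with respect to Lebesgue measure, as is typically guaranteed when $\sigma$ is non-degenerate (see Lemma \ref{StrictEx}); alternatively one can regularise $u$ in time via convolution to obtain $C^{1,2}$-approximants for which It\^o applies pointwise, and then pass to the limit using the $C^{0,2}_{ac}$-structure.
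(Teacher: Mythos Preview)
Your approach mirrors the paper's: apply Proposition \ref{ItoGen} (case 2) to $u$ along the canonical solution $X$ under $\P^{s,x}$, replace the drift by $h$ via the quasi-strict identity, take expectations, and read off \eqref{eq:mildBis}. Two points of departure deserve comment.

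First, the paper avoids localisation. After writing
\[
M_s \;=\; u(s,X_s) - u(t,x) - \int_t^s h(r,X_r)\,dr,
\]
it observes that the polynomial growth of $u$ and $h$, combined with the moment bound of Remark \ref{RMoments}, makes $\sup_{s\in[t,T]}|M_s|$ integrable; hence $M$ is a genuine martingale and one takes expectations directly at $s=T$. Your stopping-time argument is correct but unnecessary here, and the paper's route is shorter.

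Second, the paper does not share your worry about the drift identification. It simply invokes Remark \ref{Rstrict} (that is, \eqref{Estrict1}) and substitutes $\partial_r u + \sha_r u = h$ inside the $dr$-integral, treating this as immediate from Definition \ref{strict}. Your suggested workarounds---absolute continuity of $P(s,r;x,\cdot)$ via non-degeneracy of $\sigma$, or an additional time-regularisation of $u$---would import hypotheses or machinery that the statement does not assume and the paper does not use. Whether or not one regards the substitution as fully justified at the stated level of generality, your extra layer of argument is a departure from, not a completion of, the paper's proof.
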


\begin{proof}
  Let $u$ be a quasi-strict solution of \eqref{parabolic}
  with polynomial growth. In particular, for every $t \in [0,T]$,
  $u \in C^{0,2}_{ac}([t,T] \times \R^d)$.
We fix $t \in [0,T]$ and we apply
  the It\^{o} type formula \eqref{ItoGen2}
in Proposition \ref{ItoGen}, taking into account Remark \ref{Ito=Forw}.
  Clearly  the canonical process $X$ is a solution
of $dX_s = b(s, X_s)  \ ds+ \sigma (s, X_s) \ dW_s,$
  under $\P^{t,x}$.
  Explicitly, for $s \in [t,T]$, we get
\begin{align} \label{eq:usx}
  u(s,X_s) &= u(t,x) + \int_t^s \left
             (\partial_r u(r, X_r) dr + \partial_x u(r, X_r) [b(r, X_r) dr
          + \sigma(r, X_r) dW_r] \right) \nonumber  \\
           &\qquad  + \frac{1}{2} \int_t^s Tr [\sigma^\top(r,X_r) \partial^2_{xx}u(r, X_r) \sigma(r,X_r)] dr \nonumber \\
           &= u(t,x) + \int_t^s  \partial_x u(r, X_r)  \sigma(r,X_r) dW_r +
             \int_t^s \partial_ru(r,X_r) dr + \int_t^s (\sha_r u)(r,X_r) dr  \nonumber \\
           &= u(t,x) + M_s + 
             \int_t^s h(r,X_r) dr,
 \end{align}
 where
 \begin{equation}
  M_s = \int_t^s  \partial_x u(r, X_r)  \sigma(r,X_r) dW_r, \quad s \in [t,T]. 
  \end{equation}
The last equality in \eqref{eq:usx} holds because
$u $ is a quasi-strict solution of \eqref{parabolic}
so  by \eqref{Estrict},
for all $x \in \R^d$, $u(\cdot, x),$ 
 is absolutely continuous and by Remark \ref{Rstrict}, \eqref{Estrict1} holds.
 Since $u$ and $h$ have polynomial growth,
 Remark \ref{RMoments} implies that
 the local martingale vanishing at zero $M$
 is such that
 $\sup_{s\in [t,T]} \vert M_s \vert$ is an integrable r.v.,
 which implies that it is a martingale.
Being $u$ a solution of \eqref{parabolic}
   $u(T,\cdot) = g$ 
   and so \eqref{eq:usx} implies
\begin{equation} \label{eq:usxBis}
  g(X_T) = u(t,x) +  M_T +  \int_t^T h(r,X_r) dr.
   \end{equation} 
   Taking the expectation under $\P^{t,x}$, being $M$
   is a martingale, therefore it has a null expectation, gives
\begin{equation}
 \shp_{t,T}g(x) = u(t,x) + 
     \int_t^T \shp_{t,r} h(r,x) dr,
   \end{equation}
   taking into account \eqref{eq:mild} and Fubini's theorem.
   Finally $u$ is a mild solution
   of \eqref{parabolic}.

\end{proof}

Now we prove that, under certain assumptions, a mild solution is a quasi-strong solution. Again we will take $t= 0$ as initial time of the PDE, for simplicity.

\begin{prop} \label{P321}
Assume item 1., 2. and 3. of Lemma \ref{StrictEx} and that the restriction of $h$ to each compact is
 continuous in space, uniformly in $s \in [0,T]$.

Then a mild solution is a quasi-strong  solution. 
\end{prop}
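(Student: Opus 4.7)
The plan is to approximate $h$ by smoother functions $h_n$ falling within the scope of Lemma~\ref{StrictEx}, obtain associated quasi-strict solutions $u_n$, and pass to the limit via the mild representation shared by $u_n$ and $u$. Concretely, choose a standard mollifier $\rho_n \in C_c^\infty(\R^d)$ with $\int \rho_n = 1$ and set
\begin{equation*}
h_n(s,x) := \int_{\R^d} \rho_n(y)\, h(s, x - y)\, dy.
\end{equation*}
Because $h$ has polynomial growth in $x$ uniformly in $s$, the same holds for $h_n$ with growth constants independent of $n$; and $h_n(s,\cdot)$ is smooth, hence H\"older on each compact uniformly in $s$, so item~4 of Lemma~\ref{StrictEx} is met. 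The a.e.\ temporal continuity required by item~2 is transferred from $h$ to $h_n$ by a Fubini argument: the set $\{(s,y)\in[0,T]\times\mathrm{supp}\,\rho_n : h(\cdot,x-y) \text{ is discontinuous at } s\}$ has zero two-dimensional Lebesgue measure, since each $y$-slice is null by the hypothesis on $h$; therefore for a.e.\ $s$ one has continuity at $s$ of $h(\cdot, x-y)$ for a.e.\ $y$, and dominated convergence passes this to $h_n(\cdot, x)$. Items~1 and~3 on $\sigma, b, g$ are unchanged, so Lemma~\ref{StrictEx} yields a quasi-strict solution $u_n \in C^{0,2}_{ac}([0,T]\times\R^d)$ of $\shl_0 u_n = h_n$ with terminal datum $g$. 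Since $u_n$ has polynomial growth (inherited from the construction of \cite{lucertini2022optimal}), Proposition~\ref{StrMild} provides
\begin{equation*}
u_n(s,x) = \shp_{s,T}\, g(x) + \int_s^T \shp_{s,r}\, h_n(r,x)\, dr.
\end{equation*}

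Subtracting this from the mild formula defining $u$ gives
\begin{equation*}
u(s,x) - u_n(s,x) = \int_s^T \E^{s,x}\bigl[(h-h_n)(r, X_r)\bigr]\, dr.
\end{equation*}
Fix a compact $K \subset \R^d$ and $R>0$, and split the inner expectation on $\{|X_r|\le R\}$ versus $\{|X_r|>R\}$. The bounded piece is controlled by $\sup_{\tau \in [0,T],\,|z|\le R} |h - h_n|(\tau, z)$, which tends to $0$ as $n \to \infty$ by the assumed uniform-in-$s$ spatial continuity of $h$ on the compact ball of radius $R$. On the complement, the uniform polynomial bound $|h-h_n|(r,z) \le C(1+|z|^p)$ combined with the moment estimate $\sup_{s\in [0,T],\, x\in K}\E^{s,x}[\sup_{s\le r\le T}|X_r|^{p+q}]<\infty$ from Remark~\ref{RMoments} and Markov's inequality produces a tail contribution of order $R^{-q}$. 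Taking $R$ large, then $n$ large, yields $\sup_{s\in [0,T],\, x \in K} |u-u_n|(s,x) \to 0$. The $L^1$-in-time convergence $\sup_{x\in K}\int_0^T |h-h_n|(s,x)\, ds \to 0$ follows immediately from the uniform spatial convergence; the terminal condition $u(T,\cdot)=g$ reads off the mild formula; and continuity of $u$, required by Remark~\ref{RStrongNU}, is inherited as the uniform-on-compacts limit of the continuous $u_n$.

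The delicate points are the Fubini-based verification that spatial mollification preserves the a.e.\ temporal continuity hypothesis of Lemma~\ref{StrictEx}, and the $n$-uniform polynomial bound needed to control the tail $\{|X_r|>R\}$ in the expectation; both reduce the proposition to routine uniform-on-compacts convergence estimates combined with the moment bounds of Remark~\ref{RMoments}.
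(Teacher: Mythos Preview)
Your argument is correct and follows the same blueprint as the paper: mollify $h$ in space, invoke Lemma~\ref{StrictEx} to produce quasi-strict approximants, identify them with their mild representations via Proposition~\ref{StrMild}, and compare with $u$ using moment bounds. The only organizational difference is that the paper proceeds in two stages---first truncating $h$ to compact support (so that the spatial modulus of continuity is global), then mollifying---whereas you mollify directly and absorb the truncation into the tail estimate on $\{|X_r|>R\}$; the moment bound from Remark~\ref{RMoments} plays the same role in both. Two small points: the convergence required in Definition~\ref{strongNU} is $\int_0^T \sup_{x\in K}|h_n-h|(s,x)\,ds\to 0$ (sup inside the integral), though your uniform-in-$(s,x)$ convergence covers this; and your Fubini argument for a.e.\ temporal continuity of $h_n$ tacitly assumes measurability of the discontinuity set, which can instead be obtained directly from the uniform-in-$s$ spatial continuity via a countable-dense-set argument as in Remark~\ref{remItogen}.
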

\begin{rem} \label{R321}
\begin{itemize}
\item The assumptions of Proposition \ref{P321} is therefore
  slightly weaker than those of Lemma \ref{StrictEx}.
\item
   Of course $h$ does not need to by
  bicontinuous in previous statement.
\item The hypothesis on $h$ to be continuous in space
  is not necessary if we suppose some regularity
  for the transition semigroup
  $ P(t,r; x, da),$ but it goes beyond the scope of the paper.
\end{itemize}
\end{rem}
\begin{proof}[Proof of Proposition \ref{P321}]

  Let $(\P^{t,x})$ be the family of probability measures introduced
  after \eqref{A1}.

  We write 
  \begin{equation*}
u: = v^0 + v, 
\end{equation*}
 where
\begin{align}
v^0 (t,x) &= \E^{t,x} (g (X_T)),\\
v(t,x) &= \E^{t,x} \left(\int_t^T h(r, X_r) dr \right). 
\end{align}
Without restriction of generality we can suppose $g = 0$, since the general case
can be treated similarly.
We  prove that there exists a sequence of quasi-strict solutions $v_n $ of \eqref{parabolic}
with $g = 0$, according to Definition \ref{strongNU}.
We proceed in two steps:
\begin{enumerate}
 \item truncation,
\item regularization.
\end{enumerate}

1. \underline{Truncation} \\

This step consists in reducing the problem
to the case when $h$ is  bounded with compact support.
Let $h$ be as in the assumption
and $h_n(t,x) = h(t,x) \chi(x)_{[-n, n]^d},$ where $\chi_{[-n,n]^d}$ is a smooth function bounded by $1$ which support is included in $[-(n+1), n+1]^d$ and equal to $1$
on $[-n,n]^d$.
We define 
\begin{equation}\label{B2}
v_n(t,x) = \E^{t,x} \left(\int_t^T h_n(r,X_r) dr\right).
\end{equation}
Obviously $v_n$ is a mild solution of \eqref{parabolic}
with $h$ replaced with $h_n$.

By hypothesis, there exists $p \ge 1$ and a  positive constant $C$  such that
$|h(r, x)|\leq C(1+|x|^p)$,
for all $(r,x) \in [0,T] \times \R^d$.
For every $(t,x) \in [0,T] \times \R^d$, taking $X= (X_s)_{s \ge t}$ as the canonical
process solution of \eqref{A1}, with $X_t = x$.
Its law is of course $\P^{t,x}$ and we have
\begin{align}
|v_n(t,x) - v(t,x) | & \leq \int_t^T \E^{t,x}  \vert h(r,X_r) \vert  1_{\{X_r \notin [-n, n ]^d \}}   dr \\
& \leq  C \int_t^T   \E^{t,x} ((1+ |X_r|^p)  1_{\{X_r \notin [-n, n ]^d \}})  dr \\
&  \leq 2C  \int_t^T  (\E^{t,x} ((1+ |X_r|)^{2p}))^\frac{1}{2}   \P^{t,x} \{|X_r| \geq n \}^\frac{1}{2}  dr.
\end{align}
By Chebyshev this is smaller than 
\begin{align*}
 \int_t^T  2C (\E^{t,x} ((1+ |X_r|)^{2p}))^\frac{1}{2}    \frac{\E^{t,x} (|X_r|)^\frac{1}{2} }{n^\frac{1}{2}}   dr
\le \int_0^T  2C (\E^{t,x} ((1+ |X_r|)^{2p}))^\frac{1}{2}    \frac{\E^{t,x} (|X_r|)^\frac{1}{2} }{n^\frac{1}{2}}   dr. 
\end{align*}
By Remark \ref{RMoments}, for all $p \geq 1$ 
\begin{align}
\sup_{0\leq t\leq s \leq T} \E^{t,x} (|X_s |^p) \leq C (1 + |x|^p). 
\end{align}
At this point
the partial result
$ \sup_{(t,x) \in |0,T] \times  K} |v_n(t,x) - v(t,x) | \ra 0 $  is established.

The sequence $(h_n)$ and $(v_n)$ are compatible with 2. of Definition \ref{strongNU}, which concludes the step 1.

2)\underline{Regularization}

We have now reduced the problem to the case when $h$ has compact support. We define $h_n$ as 
\begin{equation}\label{Hn}
  h_n(r,y) = \int_{\R^d}  h(r, \xi) n \phi(n(y-\xi)) d \xi
=  \int_{\R^d} \phi(\xi) h(r,y - \frac{\xi}{n})  d \xi,
\end{equation}
where $\phi$ is a non-negative  mollifier on $\R^d $ with compact support.
$h_n$ is smooth in space with all the space derivatives being bounded.
In particular, each $h_n$ is H\"older continuous in space uniformly time.
By Lemma \ref{StrictEx} there exists a quasi-strict solution
(therefore continuous) $u = v_n$ to the problem \eqref{parabolic} with $h$ replaced by $h_n$ and again $g = 0$.
Since a quasi-strict solution is a mild solution by Proposition \ref{StrMild}, we can represent it as in \eqref{B2},
with $h_n$ as in \eqref{Hn}.

It is now possible to write
\begin{align}
  |v_n(t,x) - v(t,x) | & \leq  \E^{t,x}
                         \left|  \int_{\R^d}  d \xi \  \phi(\xi)  \int_t^T
                         \left(  h(r, X_r) - h(r, X_r - \frac{\xi}{n})\right)
                         dr   \right| \\
                       & \leq    \int_{\R^d}  d \xi \  \phi(\xi)
                         \int_t^T dr \int_{\R^d}
                    \left| h(r,a) - h(r,  a - \frac{\xi}{n})  \right| P(t,r; x, da) \\
                       & \leq \int_{\R^d}  d \xi \  \phi(\xi)   \int_t^T dr \ \gamma \left(h(r, \cdot) ; \frac{diam\ (supp \ \phi)}{n}\right)
 \int_{\R^d}  P(t,r ; x, da) \\
 & \leq  \sup_{r\in [t,T]}  \gamma \left(h(r,\cdot) ; \frac{diam\ (supp \ \phi)}{n}\right) \cdot  \int_{\R^d}  d \xi \  \phi(\xi)   \int_t^T dr \int_{\R^d} P(t,r; x, da)  \\
& \leq  T \  \sup_{r\in [t,T]}  \gamma \left(h(r) ; \frac{diam\ (supp \ \phi)}{n}\right)
\end{align}
and this converges to zero by assumption, uniformly in $t \in [0,T]$. We have now proved that the sequence $v_n$ converges to $v$ uniformly on each compact. The convergence of $(h_n)$ is even stronger than required in item 2. of 
Definition \ref{strongNU}. Finally $v$ is a quasi-strong solution to \eqref{parabolic}. This completes the proof.  

\end{proof}


\section{The $C^{0,1}$-type chain rules for weak Dirichlet processes and semimartingales}

\label{sec:C01Chain}

When $u \in C^{0,1}$ is a strong solution of \eqref{parabolic}, it was the object of
Theorem 4.5 of \cite{rg2}.

\begin{thm}\label{Rep} 
Let $b: [0,T] \times \R^d \rightarrow \R^d $    and      $\sigma : [0,T]\times \R^d \rightarrow L(\R^m , \R^d) $
be Borel functions
and locally bounded.

Let $t \in [0,T]$.
Let $h:[t,T] \times \R^d \rightarrow \R$ and $g: \R^d \rightarrow \R$
as in the lines before \eqref{parabolic}.
Consider $u: [t,T] \times \R^d \rightarrow \R$ supposed to be of
class $C^{0,1}([t,T]\times \R^d)   $ be a quasi-strong solution of the Cauchy problem \eqref{parabolic}.

Let $(S_s)_{s\in [t,T]}$ be a process of the form
\begin{align}
S_s = S_0 + \int_t^s \sigma(r,S_r)  dW_r  + A_s, 
\end{align}
where $(A_s)_{s\in [t,T]}$ is an $(\shf_s)_{s\in [t,T]}$-martingale orthogonal process
such $[A,A]$ exists.

Let us suppose that the assumptions below are verified.
\begin{enumerate}
\item $\int_t^s \partial_x u(r,S_r)  d^-A_r, \ s \in [t,T]$  exists
   and it is a martingale orthogonal process.
  \item \begin{align}\label{14}
&\lim_{n\rightarrow \infty } \int_t^\cdot (\partial_x u_n(r,S_r)- \partial_x u(r,S_r))  d^-A_r \\
&\qquad - \lim_{n\rightarrow \infty } \int_t^\cdot (\partial_x u_n(r,S_r)- \partial_x u(r,S_r)) \cdot  b(r,S_r) dr = 0 \qquad s\in [t,T]  \ \text{u.c.p.},
\end{align}
where the sequence $u_n$ is the {\it approximating sequence} of the
quasi-strong solution $u$. 
\end{enumerate}

Then
\begin{equation} \label{15}
  u(s,S_s)= u(t,S_t)  +  \int_t^s (\partial_x u)(r,S_r)\sigma (r,S_r) dW_r + \shb^S(u)_s,
\end{equation}
where, for $s\in [t,T]$,
\begin{equation}\label{15Bis}
 \shb^S(u)_s= \int_t^s  h(r,S_r)dr + \int_t^s \partial_x u(r,S_r)  d^-A_r - \int_t^s \partial_{x} u(r,S_r) \cdot b(r,S_r) dr,
\end{equation}
and $\shb^S(u)$ is a martingale orthogonal process. 
\end{thm}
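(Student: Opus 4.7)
The plan is to obtain the explicit form of the remainder in \eqref{15Bis} by applying the $C^{0,2}_{ac}$-It\^o formula of Proposition \ref{ItoGen} to the smooth approximating sequence $u_n$ from the quasi-strong solution, exploit that each $u_n$ satisfies its approximating PDE classically (a.e.\ in time), and pass to the limit $n\to\infty$ using Assumptions 1 and 2.

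First I would invoke Proposition \ref{FukuDir} item 1 applied to $u\in C^{0,1}$ and to the weak Dirichlet process $S$, whose continuous local martingale component is $M_s=\int_t^s\sigma(r,S_r)\,dW_r$. This yields immediately the decomposition
$$u(s,S_s)=u(t,S_t)+\int_t^s\partial_x u(r,S_r)\,\sigma(r,S_r)\,dW_r+\shb^S(u)_s,$$
together with the abstract property that $\shb^S(u)$ is a martingale orthogonal process. So \eqref{15} and the orthogonality claim are settled on the spot; the remaining substantive task is to identify $\shb^S(u)_s$ with the explicit right-hand side of \eqref{15Bis}.

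Next, for each $n$ I would apply the It\^o formula of Proposition \ref{ItoGen} to $u_n\in C^{0,2}_{ac}([t,T]\times\R^d)$ and to $S$. Since $A$ is martingale orthogonal, $[M,A]=0$ and hence $[S,S]=\int\sigma\sigma^\top dr+[A,A]$. Combined with Remark \ref{Ito=Forw} to treat the semimartingale part of $d^-S$, this gives
\begin{align*}
u_n(s,S_s)-u_n(t,S_t)
&=\int_t^s\partial_r u_n\,dr+\int_t^s\partial_x u_n\,\sigma\,dW+\int_t^s\partial_x u_n\,d^-A\\
&\quad+\frac{1}{2}\int_t^s Tr[\sigma^\top\partial^2_{xx}u_n\,\sigma]\,dr+\frac{1}{2}\int_t^s\partial^2_{xx}u_n\,d[A,A].
\end{align*}
Since $u_n$ is a quasi-strict solution of $\shl_0 u_n=h_n$, Remark \ref{Rstrict} gives $\partial_r u_n=h_n-\sha_r u_n$ a.e.\ in $r$. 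Substituting at $x=S_r$, the $\partial_r u_n$ contribution cancels the trace term exactly, leaving
$$\shb^S(u_n)_s=\int_t^s h_n(r,S_r)\,dr-\int_t^s\partial_x u_n(r,S_r)\cdot b(r,S_r)\,dr+\int_t^s\partial_x u_n(r,S_r)\,d^-A_r+\frac{1}{2}\int_t^s\partial^2_{xx}u_n\,d[A,A],$$
where $\shb^S(u_n)_s$ is defined by Proposition \ref{FukuDir} item 1 applied to $u_n$.

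The final step is the limit $n\to\infty$. The uniform-on-compacts convergence $u_n\to u$ from Definition \ref{strongNU}, combined with standard Burkholder--Davis--Gundy arguments for the Brownian stochastic integral, gives $\shb^S(u_n)\to\shb^S(u)$ u.c.p. The $L^1$-convergence $h_n\to h$ yields $\int h_n\,dr\to\int h\,dr$ u.c.p., while dominated convergence together with local boundedness of $b$ handles the drift term. Assumption 1 supplies the existence of $\int\partial_x u\,d^-A$, and Assumption 2, paired with the drift convergence, forces $\int\partial_x u_n\,d^-A\to\int\partial_x u\,d^-A$ u.c.p.; combining all these convergences in the identity above produces \eqref{15Bis}. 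The hard part is precisely this convergence of the forward integral $\int\partial_x u_n\,d^-A$: forward integrals are defined through regularization quotients in probability and lack the isometry or dominated-convergence tools available for It\^o integrals, so their limits are genuinely delicate. Assumption 2 is tailor-made to sidestep this difficulty by pairing the pathological forward-integral increment with the benign drift increment, so that only the combined limit---which is easier to verify in concrete applications such as the companion paper \cite{CR2}---is required.
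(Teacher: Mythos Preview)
Your limit step contains a genuine gap. Definition \ref{strongNU} gives only uniform-on-compacts convergence of $u_n$ to $u$ and $L^1$-in-time convergence of $h_n$ to $h$; it says \emph{nothing} about convergence of $\partial_x u_n$. Hence your claim that ``BDG for the Brownian stochastic integral gives $\shb^S(u_n)\to\shb^S(u)$ u.c.p.'' is unjustified: since $\shb^S(u_n)=u_n(\cdot,S)-u_n(t,S_t)-\int_t^\cdot\partial_x u_n\,\sigma\,dW$, you would need $\int_t^T |\partial_x u_n-\partial_x u|^2(r,S_r)\,|\sigma(r,S_r)|^2\,dr\to 0$ in probability, which you cannot assert. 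Equivalently, the continuity of $\shb^S$ in Proposition \ref{FukuDir} is as a map on $C^{0,1}$, whereas the quasi-strong notion delivers only $C^0$-convergence. The same lack of control makes the extra term $\tfrac12\int\partial^2_{xx}u_n\,d[A,A]$ (which you correctly isolate) impossible to handle in the limit by your route.

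The paper sidesteps this by reversing the roles of the two sides of the It\^o identity. It writes the local martingale $N^n:=\int_t^\cdot\partial_x u_n\,\sigma\,dW$ as $u_n(\cdot,S)-u_n(t,S_t)$ minus the remaining It\^o terms, and observes that \emph{those} converge u.c.p.\ by Definition \ref{strongNU} and Assumption \eqref{14} --- no convergence of derivatives is invoked. Hence $N^n$ converges u.c.p.\ to some process $N$. The key additional ingredient, absent from your argument, is that the space of continuous local martingales vanishing at zero is \emph{closed} under u.c.p.\ convergence (Proposition 4.4 of \cite{rg2}); therefore $N$ is itself a local martingale. Only at this stage does the paper invoke Proposition \ref{FukuDir} item 1 for $u$: one now has two weak Dirichlet decompositions of $u(\cdot,S)$, namely $u(t,S_t)+N+\bar\shb^S(u)$ with $\bar\shb^S(u)$ martingale orthogonal, and the abstract one \eqref{eq:WD}; by uniqueness, $N=\int_t^\cdot\partial_x u\,\sigma\,dW$ and $\bar\shb^S(u)=\shb^S(u)$. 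This closedness-plus-uniqueness maneuver is the idea your proof is missing.
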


\begin{rem} \label{RForwd}
 $\int_t^\cdot \partial_x u(r,S_r)  d^-A_r $  exists
 and it is a martingale orthogonal process in the following two cases.
 \begin{enumerate}
 \item  $A$ is a bounded variation process so that
   $S$ is a semimartingale.
   In this case $\int_t^\cdot \partial_x u(r,S_r)  d^-A_r$
   is the (componentwise) Lebesgue-Stieltjes integral
   $\int_t^\cdot \partial_x u(r,S_r)dA_r,$
   which is in particular a bounded variation process
   and therefore a martingale orthogonal process.
 \item  $u \in C_{ac}^{0,2}([t,T], \R^d)$.
  Indeed, by Proposition \ref{ItoGen} and Remark \ref{Ito=Forw}
   \begin{align} \label{EForward}
     \int_t^s \partial_x u(r,S_r)  d^-A_r &=
 u(s,S_s) - u(t,x) -  \int_t^s  (\partial_x u)(r,S_r) \sigma(r,S_r) dW_r\\
         & \qquad - \int_t^s \partial_r u(r,S_r) dr - \frac{1}{2} \int_t^s \partial^2_{xx} u(r,S_r)d[S,S]_r, \ s \in [t,T].
   \end{align}
      In particular the aforementioned forward integral exists.
      It remains to show that it is an
      $(\shf_s)_{s \in [t,T]}$-martingale orthogonal process.
      Let us write $M := \int_0^\cdot \sigma(r,X_r) dW_r$.
      For notational simplicity, we set $ d= 1$ and $t=  0$.
      Since $u \in C^{0,1}$, Proposition 15.3 of \cite{Russo_Vallois_Book},
      $ u(\cdot,S) = M^u + A^u,$
      where
      $M^u= \int_0^\cdot \partial_x u(r,S_r) dM_r $ 
      and $A^u$ is an    $(\shf_s)_{s \in [0,T]}$-martingale orthogonal process.
      Let $N$ be a continuous local martingale.
      We get
      \begin{equation}\label{eq:stab}
        [u(\cdot,S),N] = \int_0^\cdot \partial_x u(r,S_r) d[M,N]_r
        =  \int_0^\cdot \partial_x u(r,S_r) \sigma(r,X_r) d[W,N]_r.
 \end{equation}
 The process
 $$ u(\cdot,S) - \int_0^\cdot  \partial_x u(r,S_r) \sigma(r,S_r) dW_r, \
 t \in [0,T],$$
 is martingale orthogonal since
\begin{equation}
 \left[\int_0^\cdot  \partial_x u(r,S_r) \sigma(r,S_r) dW_r, N\right]
=  \int_0^\cdot \partial_x u(r,S_r) \sigma(r,X_r) d[W,N]_r.
 \end{equation}
 Finally  the covariation of the right-hand side of \eqref{EForward}
with $N$ vanishes since
all the other processes appear in the right-hand
side of \eqref{EForward} are martingale orthogonal being of bounded
variation.

\end{enumerate}
\end{rem}
\begin{rem} \label{rmk:DerConv}
  In fact condition \eqref{14} is a bit obscure: it is for instance verified
  if $A$ has bounded variation and if there is a quasi-strong solution
 such that space derivatives of the approximating sequence $u_n$, i.e. 
$\partial_x u_n$, converges pointwise (with uniform bound in $n$
       on each compact)
       to the derivative of $u$.
       
       This can happen even in a possibly degenerate case,
if for instance,       
for all $s \in [0,T]$,
       $\sigma(s, \cdot), b(s,\cdot)$ are of class $C^1$
       such that $\partial_x \sigma, \partial_x b$ are uniformly bounded.
       We also suppose $\sigma, b: [t,T] \times \R^d \rightarrow \R$ a.e. continuous  in time for all $x \in \R^d$.
       Concerning the other coefficients we suppose $g$ (resp. $h$)
       such that $\partial_x g$ (resp. $\partial_x h$)
       has polynomial growth.
   In this case \eqref{parabolic} admits a mild $C^{0,1}$-solution
       of the type
       $$ u(s,x) = \E(g(Y^{s,x}_T) + \int_t^T h(r,Y_T^{r,x}) dr),
       (s,x) \in [t,T]\times \R^d,$$
       where $Y^{s,x}$ is a strong solution of the SDE 
       \eqref{A1}
       on some probability space $(\Omega, \shf,\P)$
       with related expectation $\E$.
       In this case, adopting the formalism with $d=1$ for simplicity
\begin{equation} \label{eq:derivative}
       \partial_x u(s,x) = \E(g'(Y^{s,x}_T) Z^{s,x}_T ) +
       \int_t^T \partial_x h(r,Y_T^{r,x}) Z^{r,x}_T  dr),
       \end{equation}
       where $Z^{s,x}:= \partial_x Y^{s,x}$,
       the derivative being taken in $L^2(\Omega)$.

       By the same construction as in the proof of
       Proposition \ref{P321} (truncation-regularization procedure)
       $u$ can be shown to be a quasi-strong solution
       of \eqref{parabolic} by approximation via solutions $u_n$,
       where $u_n$ is a solution of \eqref{parabolic} replacing
       $g$ (resp. $h$) by $g_n$ (resp. $h_n$)
       by a truncation-regularization procedure.
       We apply \eqref{eq:derivative} replacing
       $g, h, u$ with $g_n, h_n, u_n$.
       In this way $\partial_x u_n$ suitably converges to $\partial_x u$.

    \end{rem}
 
\begin{proof}[Proof of Theorem \ref{Rep}]
  For simplicity of the formulation we set $d=1$ and without restriction of generality we set $t=0$.
  Let $u_n \ra u  $ be a sequence as in Definition \ref{strongNU}. By Proposition \ref{ItoGen}
and Remark \ref{Ito=Forw},
  we get 
  \begin{align*}
    u_n(s, S_s)&= u_n(0,S_0) + \int_0^s \shl_0 u_n(r,S_r) dr - \int_0^s \partial_x u_n(r,S_r) b(r,S_r)  dr \\
           &\qquad
       + \int_0^s \partial_x u_n (r,S_r) d^- A_r + N^n_s,
\end{align*}
where  
\begin{align}
  N^n_s  \doteqdot
  \int_0^s \partial_x u_n(r,S_r) \sigma(r,S_r) dW_r ,
\end{align}
which is in particular
a local martingale vanishing at zero.
By assumption \eqref{14}, the process $N^n$ converges u.c.p. to
\begin{equation}\label{eq:mart}
 N_s
    \doteqdot
 u(s, S_s) - u(0,S_0) - \bar {\shb}^S(u)_s,
  \end{equation}
where
\begin{equation}
 {\bar \shb}^S(u)_s= \int_0^s  h(r,S_r)dr + \int_0^s \partial_x u(r,S_r)  d^-A_r - \int_0^s \partial_{x} u(r,S_r)   b(r,S_r) dr,
\end{equation}

Now,  the  space of the space of continuous  local martingales vanishing at zero,
as a linear subspace of $ C_\shf([0,T] \times \Omega ; \R), $
 is closed (under the u.c.p. convergence topology),
  see e.g.  Proposition 4.4 in \cite{rg2}.
 Consequently $N$ is a continuous local martingale vanishing at zero.
It remains to prove the following.
\begin{enumerate}
  \item ${\bar \shb}^S(u)$ is a martingale orthogonal process.
\item
\begin{equation} \label{eq:N}
  N_s =  \int_0^s \partial_x u(r,S_r) \sigma(r,S_r) dW_r.
\end{equation}

\end{enumerate}
 1. follows by additivity since 
 $\int_0^\cdot \partial_x u(r,S_r)d^-A_r$
and all bounded variation processes
 are martingale orthogonal
processes.

Concerning 2.,
setting  $M_s =   \int_0^s \sigma(r,S_r) dW_r,  $  
besides
\eqref{eq:mart},
Proposition  \ref{FukuDir} 1. provides another weak Dirichlet decomposition 
\begin{align}
u(s, S_s) = u(0, S_0) +  \int_0^s \partial_x u(r,S_r) \sigma(r,S_r) dW_r  + \shb^S(u)_s,
\end{align}
where $\shb^S(u)$ is a martingale orthogonal process. 
By the uniqueness of the weak Dirichlet decomposition we get
finally \eqref{eq:N}.
The proof is now complete.

\end{proof}

In the next corollary we will apply Theorem \ref{Rep} to a solution $S$ of a 
non-Markovian  SDE.
That result extends Corollary 4.6 of \cite{rg2},
where the coefficients $\sigma, b$ were continuous.
Here, we also assume $u \in C^{0,1}([t,T[\times \R^d) \cap C^{0}([t,T] \times \R^d)$, that is,
$u(T,\cdot)$ may not be differentiable in the space variable.
This allows lower regularity in the terminal condition of the
PDE \eqref{parabolic}.

\begin{cor}\label{representation}
  Let $f: \Omega \times [0,T] \times \R^d \rightarrow \R^d$ be a
  progressively measurable field
  and a.s. locally bounded.
  Let $b: [0,T] \times \R^d \rightarrow \R^d $  and
  $\sigma : [0,T]\times \R^d \rightarrow L(\R^m , \R^d) $ be  Borel locally bounded functions.
Let $(S_s)_{s\in [t,T]}$ be a solution to the SDE
\begin{align}
dS_s = f(s,S_s) ds + \sigma(s,S_s)dW_s.
\end{align}
Let $t \in [0,T]$,
 $h:[t,T] \times \R^d \rightarrow \R$ and $g: \R^d \rightarrow \R$
as in the lines before \eqref{parabolic}.
Let  $u  \in C^{0,1}([t,T[\times \R^d) \cap C^{0}([t,T] \times \R^d)$ be a quasi-strong solution of the Cauchy problem \eqref{parabolic}
  fulfilling
\begin{equation}\label{L21} 
\int_0^T \vert \partial_x u(s,S_s) \vert^2 ds  < \infty  \ {\textit a.s.}
\end{equation}
Moreover we suppose the validity of one of the following items.
\begin{enumerate} 
\item The approximating sequence $u_n$ of Definition \ref{strongNU}
  fulfills
  \begin{align}\label{16}
 \lim_{n\rightarrow \infty}  \int_t^\cdot  (\partial_x u_n(r,S_r)- 
\partial_x u(r,S_r)) \cdot (f(r,S_r) - b(r,S_r)) dr = 0 \ \ \text{u.c.p.}
\end{align}
\item $\sigma \sigma^\top(r,S_r)$ is invertible
  for every $r \in [t,T]$ and denote 
  the right pseudo-inverse $ \sigma^{-1}(r,S_r):=
  \sigma^{\top} (\sigma \sigma^{-1})(r,S_r).$
  Moreover the Novikov condition  
\begin{equation} \label{Novikov}
  \E\left ( \exp\left(\frac{1}{2} \int_t^T
 \vert \sigma^{-1}(f-b)\vert^2(r,S_r) dr\right)\right) < \infty,
  \end{equation}
  holds.
\end{enumerate}
Then, for $s\in [t,T],$
\begin{equation}\label{215}
  u(s,S_s)= u(t,S_t)  +  \int_t^s  (\partial_x u) (r,S_r)\sigma (r,S_r) dW_r + \shb^S(u)_s,
\end{equation}
where, 
\begin{equation}\label{216}
 \shb^S(u)_s   = \int_t^s  h(r,S_r)dr + \int_t^s  \partial_x u(r,S_r) \cdot f(r,S_r)  dr - \int_t^s \partial_{x} u(r,S_r) \cdot b(r,S_r) dr.
\end{equation}
\end{cor}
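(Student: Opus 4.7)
The natural strategy is to reduce Corollary \ref{representation} to Theorem \ref{Rep} by writing the drift part of $S$ as the martingale-orthogonal component. I set $A_s := \int_t^s f(r,S_r)\,dr$, which is of bounded variation, hence a martingale orthogonal process with $[A,A] = 0$. By Remark \ref{RForwd}(1), the forward integral $\int_t^{\cdot} \partial_x u(r,S_r)\,d^-A_r$ exists and coincides with the Lebesgue--Stieltjes integral $\int_t^{\cdot} \partial_x u(r,S_r)\cdot f(r,S_r)\,dr$, so condition 1 of Theorem \ref{Rep} is satisfied. The assumption \eqref{L21} together with local boundedness of $\sigma$ ensures that the It\^o integral $\int_t^{\cdot} \partial_x u(r,S_r)\sigma(r,S_r)\,dW_r$ is a well-defined local martingale.

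Under hypothesis 1, condition 2 of Theorem \ref{Rep} collapses to \eqref{16}: since $d^-A_r = f(r,S_r)\,dr$, the two limits in \eqref{14} combine into $\lim_n \int_t^{\cdot}(\partial_x u_n - \partial_x u)(r,S_r)\cdot(f-b)(r,S_r)\,dr$, which vanishes u.c.p. by assumption. A direct application of Theorem \ref{Rep} then yields \eqref{215}, and substituting $\int d^-A_r = \int f\,dr$ in \eqref{15Bis} reproduces \eqref{216}. Note also that $\shb^S(u)$ given by \eqref{216} is automatically martingale orthogonal, since it is of bounded variation.

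Under hypothesis 2, I instead perform a Girsanov change of measure. Novikov's condition \eqref{Novikov} guarantees that the Dol\'eans--Dade exponential of $-\int_t^{\cdot}\sigma^{-1}(f-b)(r,S_r)\cdot dW_r$ is a true $\P$-martingale; let $\tilde \P \sim \P$ be the corresponding equivalent measure on $\shf_T$. By Girsanov, $\tilde W_s := W_s + \int_t^s \sigma^{-1}(f-b)(r,S_r)\,dr$ is a $\tilde \P$-Brownian motion and, using $\sigma\sigma^{-1} = I_d$, the process $S$ satisfies $dS_s = b(s,S_s)\,ds + \sigma(s,S_s)\,d\tilde W_s$ under $\tilde \P$. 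I then apply Theorem \ref{Rep} under $\tilde \P$ with $\tilde A_s := \int_t^s b(r,S_r)\,dr$. In this configuration condition 2 of Theorem \ref{Rep} is trivially satisfied: both $\int (\partial_x u_n - \partial_x u)\,d^-\tilde A_r$ and $\int (\partial_x u_n - \partial_x u)\cdot b\,dr$ are identical Lebesgue integrals, regardless of the behaviour of the approximating sequence. The resulting chain rule under $\tilde \P$ simplifies to $u(s,S_s) = u(t,S_t) + \int_t^s \partial_x u(r,S_r)\,\sigma(r,S_r)\,d\tilde W_r + \int_t^s h(r,S_r)\,dr$. Substituting $d\tilde W = dW + \sigma^{-1}(f-b)\,dr$ and using $\sigma\sigma^{-1} = I_d$ to simplify $\partial_x u\,\sigma\,\sigma^{-1}(f-b) = \partial_x u\cdot (f-b)$ recovers \eqref{215}-\eqref{216}; equivalence of measures transfers the identity back from $\tilde \P$ to $\P$.

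A final technical point is that $u$ is only $C^{0,1}$ on $[t,T[\times \R^d$ (and merely continuous at time $T$), while Theorem \ref{Rep} is stated for $C^{0,1}([t,T]\times \R^d)$. I handle this by running the above argument on $[t,T']$ for each $T'\in [t,T)$: the restriction $u|_{[t,T']}$ is a quasi-strong solution of the PDE on $[t,T']$ with terminal datum $u(T',\cdot)$, obtained by restricting the original approximating sequence $u_n|_{[t,T']}$, and both hypotheses \eqref{16} and \eqref{Novikov} are stable under restriction in time. Letting $T'\uparrow T$, the left-hand side of \eqref{215} converges to $u(T,S_T)$ by continuity, the stochastic integral converges by \eqref{L21} and dominated convergence for It\^o integrals, and the bounded-variation terms converge by local integrability of $h$ and local boundedness of $\partial_x u, b, f$. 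I expect the main delicacy of the whole argument to be the Girsanov step in hypothesis 2: namely, checking that equivalence of measures cleanly transports the $\P$-a.s. identity from $\tilde \P$ while respecting the u.c.p. convergences inherited from Theorem \ref{Rep}; the other verifications reduce to bounded-variation bookkeeping.
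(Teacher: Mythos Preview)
Your proof is correct and follows essentially the same approach as the paper: set $A_s=\int_t^s f(r,S_r)\,dr$, reduce to Theorem \ref{Rep} on $[t,T-\varepsilon]$ (where $u$ is genuinely $C^{0,1}$) and pass to the limit via \eqref{L21} for item 1, then treat item 2 by the Girsanov transformation with density given by the Novikov condition, under which $S$ has drift $b$ so that condition \eqref{14} becomes vacuous. The only organizational difference is that the paper proves item 1 first (including the $T-\varepsilon$ limiting step) and then, for item 2, invokes the already-established item 1 under the new measure $\Q$ with $f=b$, whereas you apply Theorem \ref{Rep} directly under $\tilde\P$ and postpone the $T'\uparrow T$ argument to a final paragraph covering both cases; the content is the same.
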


\begin{rem} \label{RNovikov}
Clearly, whenever $(\omega,s,x) \mapsto \sigma^{-1}(s,x) (f(\omega,s,x) - b(s,x)) $
is bounded, then \eqref{Novikov} is fulfilled.
\end{rem}
\begin{proof}[Proof of Corollary \ref{representation}.]
In the proof, we set $t=0$ and $d=1$ without loss of generality.
We first prove the result assuming that item 1. holds.
We set
\begin{align}
A = \int_0^\cdot f(r,S_r) dr. 
\end{align}
Let $\ve > 0$. The idea is to apply Theorem \ref{Rep} with $T$ replaced by $T - \ve$. The function $u$ restricted to $[0, T- \ve] \times \R^d$ is trivially a quasi-strong solution to 
\begin{equation}
  \shl_0 u(s,x) = h(s,x), \ u(T-\ve, \cdot) = g_\ve,
\end{equation}
where $g_\ve = u(T-\ve, \cdot)$.
By Theorem \ref{Rep}, taking into account Remark \ref{Ito=Forw},
we get the decomposition  \eqref{215} and \eqref{216} for $s \in [0 , T- \ve]$.
This implies those decompositions for $s \in [0,T[$.
The case $s = T$ follows letting
$ s \rightarrow T$ and
using in particular condition \eqref{L21}.

Now let us assume that item 2. is verified. 
Let $\Q$ be the probability measure on $(\Omega, \shf_T)$ defined
\begin{equation}
  d \Q  = Z_T (\eta) d \P,
\end{equation}
where
 \begin{equation}
   Z_s(\eta) =  e^ {\left(\int_0^s \eta_r dW_r - \frac{1}{2} \int_0^s \left| \eta_r\right|^2 dr \right)},
   \ s \in [0,T]
 \end{equation}
 and
 \begin{equation}
  \eta_s = \sigma^{-1}(s,S_s)  (b(s,S_s) - f(s,S_s)), \ s \in [0,T].
  \end{equation}
Then, the Novikov condition implies that  $Z(\eta)$ is a martingale.
Hence by Girsanov's Theorem,
the process 
\begin{equation}
\tilde{W}_s  =  W_s - \int_0^s \sigma^{-1}(r,S_r) (b(r,S_r)- f(\omega, r,S_r) ) dr, \ s \in [0,T],
\end{equation}
is an $(\shf_s, \Q)$-Brownian motion and $(S_s)_{s\in[0,T]}$ satisfies the stochastic differential
equation
\begin{equation}
dS_s = b(s,S_s) ds + \sigma(s,S_s) d\tilde{W}_{s}.
\end{equation}
Then, by item 1. in the statement (here $\Q$ replaces $\P$, $f = b$
and $\tilde W$ replaces $W$),
we obtain
\begin{equation}
 u(s,S_s)= u(0,S_0)  +  \int_0^s \partial_x u(r,S_r)\sigma (r,S_r) d\tilde W_r + \int_0^s h(r,S_r) dr, 
\ s \in [t,T],
\end{equation}
and the conclusion follows.
\end{proof}

\begin{rem} \label{R223}
  If $\lim_{n\rightarrow \infty} \partial_ xu_ n = \partial_x u $   in $C^0 ([t,T]\times \R^d)$, then assumption \eqref{16} is trivially verified.

\end{rem}

{\bf ACKNOWLEDGMENTS.}

The authors would like to thank the anonymous Referee
for having carefully read the paper, which  has
permitted us to improve its quality.
The research of the second named author
was partially supported by the  ANR-22-CE40-0015-01 project (SDAIM).

\addcontentsline{toc}{chapter}{Bibliography}
\bibliographystyle{plain}
\bibliography{../../../BIBLIO_FILE/biblioCarlo}

\begin{thebibliography}{10}

\bibitem{BandiniBSDEs}
E.~Bandini and F.~Russo.
\newblock Special weak {Dirichlet} processes and {BSDEs} driven by a random
  measure.
\newblock {\em Bernoulli}, 24(4A):2569--2609, 2018.

\bibitem{BandiniRusso_RevisedWeakDir}
E.~Bandini and F.~Russo.
\newblock Weak {D}irichlet processes and generalized martingale problems.
\newblock {\em Stochastic Process. Appl.}, 170(104261), 2024.

\bibitem{barrasso-russoAF}
A.~Barrasso and F.~Russo.
\newblock A {N}ote on {T}ime-{D}ependent {A}dditive {F}unctionals.
\newblock {\em Communications on Stochastic Analysis}, 11(3):313--334, 2017.

\bibitem{barrasso-russo3}
A.~Barrasso and F.~Russo.
\newblock Martingale driven {BSDE}s, {PDE}s and other related deterministic
  problems.
\newblock {\em Stochastic Processes and their Applications}, 133:193--228,
  2021.

\bibitem{BLT}
B.~Bouchard, G.~Loeper, and X.~Tan.
\newblock A {{\(\mathbb{C}^{0, 1}\)}}-functional {It{\^o}}'s formula and its
  applications in mathematical finance.
\newblock {\em Stochastic Process. Appl.}, 148:299--323, 2022.

\bibitem{BTW}
B.~Bouchard, X.~Tan, and J.~Wang.
\newblock A {{\(C^1\)}}-{It{\^o}}'s formula for flows of semimartingale
  distributions.
\newblock {\em Appl. Math. Optim.}, 90(1):31, 2024.
\newblock Id/No 26.

\bibitem{by}
N.~Bouleau and M.~Yor.
\newblock Sur la variation quadratique des temps locaux de certaines
  semimartingales.
\newblock {\em C. R. Acad. Sci. Paris S\'er. I Math.}, 292(9):491--494, 1981.

\bibitem{DeAngelis}
C.~Cai and T.~De~Angelis.
\newblock A change of variable formula with applications to multi-dimensional
  optimal stopping problems.
\newblock {\em Stochastic Processes and Applications}, 164:33--61, 2023.

\bibitem{CarmonaDelarue1}
R.~Carmona and F.~Delarue.
\newblock {\em Probabilistic theory of mean field games with applications {I}.
  {Mean} field {FBSDEs}, control, and games}, volume~83 of {\em Probab. Theory
  Stoch. Model.}
\newblock Cham: Springer, 2018.

\bibitem{CR2}
C.~Ciccarella and F.~Russo.
\newblock Verification theorem related to a zero sum stochastic differential
  game, based on a chain rule for non-smooth functions.
\newblock {\em Preprint HAL 0463789 V2}, 2025.

\bibitem{cjps}
E.~{\c{C}}inlar, J.~Jacod, P.~Protter, and M.~J. Sharpe.
\newblock Semimartingales and {M}arkov processes.
\newblock {\em Z. Wahrsch. Verw. Gebiete}, 54(2):161--219, 1980.

\bibitem{DGR2}
C.~Di~Girolami and F.~Russo.
\newblock {G}eneralized covariation and extended {F}ukushima decompositions for
  {B}anach space valued processes. {A}pplication to windows of {D}irichlet
  processes.
\newblock {\em Infinite Dimensional Analysis, Quantum Probability and Related
  Topics (IDA-QP)}, 15(2), 2012.

\bibitem{er}
M.~Errami and F.~Russo.
\newblock Covariation de convolution de martingales.
\newblock {\em C. R. Acad. Sci. Paris S\'er. I Math.}, 326(5):601--606, 1998.

\bibitem{er2}
M.~Errami and F.~Russo.
\newblock {$n$}-covariation, generalized {D}irichlet processes and calculus
  with respect to finite cubic variation processes.
\newblock {\em Stochastic Process. Appl.}, 104(2):259--299, 2003.

\bibitem{erv}
M.~Errami, F.~Russo, and P.~Vallois.
\newblock It\^o's formula for {$C\sp {1,\lambda}$}-functions of a c\`adl\`ag
  process and related calculus.
\newblock {\em Probab. Theory Related Fields}, 122(2):191--221, 2002.

\bibitem{fo}
H.~F{\"o}llmer.
\newblock Calcul d'{I}t\^o sans probabilit\'es.
\newblock In {\em Seminar on Probability, XV (Univ. Strasbourg, Strasbourg,
  1979/1980) (French)}, volume 850 of {\em Lecture Notes in Math.}, pages
  143--150. Springer, Berlin, 1981.

\bibitem{FolDir}
H.~F{\"o}llmer.
\newblock Dirichlet processes.
\newblock In {\em Stochastic integrals ({P}roc. {S}ympos., {U}niv. {D}urham,
  {D}urham, 1980)}, volume 851 of {\em Lecture Notes in Math.}, pages 476--478.
  Springer, Berlin, 1981.

\bibitem{fp}
H.~F{\"o}llmer and Ph. Protter.
\newblock On {I}t\^o's formula for multidimensional {B}rownian motion.
\newblock {\em Probab. Theory Related Fields}, 116(1):1--20, 2000.

\bibitem{friz_hairer}
P.~K. Friz and M.~Hairer.
\newblock {\em A course on rough paths. {With} an introduction to regularity
  structures}.
\newblock Universitext. Cham: Springer, 2nd updated edition edition, 2020.

\bibitem{FuhrmanTessitore}
M.~Fuhrman and G.~Tessitore.
\newblock Generalized directional gradients, backward stochastic differential
  equations and mild solutions of semilinear parabolic equations.
\newblock {\em Appl. Math. Optim.}, 51(3):279--332, 2005.

\bibitem{rg1}
F.~Gozzi and F.~Russo.
\newblock Verification theorems for stochastic optimal control problems via a
  time dependent {F}ukushima-{D}irichlet decomposition.
\newblock {\em Stochastic Processes and their Applications}, 116(11):1530 --
  1562, 2006.

\bibitem{rg2}
F.~Gozzi and F.~Russo.
\newblock Weak {D}irichlet processes with a stochastic control perspective.
\newblock {\em Stochastic Processes and their Applications}, 116(11):1563 --
  1583, 2006.

\bibitem{gnrv}
M.~Gradinaru, I.~Nourdin, F.~Russo, and P.~Vallois.
\newblock {$m$}-order integrals and generalized {I}t\^o's formula: the case of
  a fractional {B}rownian motion with any {H}urst index.
\newblock {\em Ann. Inst. H. Poincar\'e Probab. Statist.}, 41(4):781--806,
  2005.

\bibitem{lucertini2022optimal}
G.~Lucertini, S.~Pagliarani, and A.~Pascucci.
\newblock Optimal regularity for degenerate {K}olmogorov equations with rough
  coefficients.
\newblock {\em Preprint ArXiv:2204.14158}, 2022.

\bibitem{Lunardi}
A.~Lunardi.
\newblock {\em {Analytic semigroups and optimal regularity in parabolic
  problems. Reprint of the 1995 hardback ed.}}
\newblock {Modern Birkh\"auser Classics. Basel: Birkh\"auser. xvii, 424~p.},
  2013.

\bibitem{Peskir2}
G.~Peskir.
\newblock A change-of-variable formula with local time on surfaces.
\newblock In {\em S\'eminaire de Probabilit\'es, XL}, volume 1899 of {\em
  Lecture Notes in Math.}, pages 69--96. Springer, Berlin, 2007.

\bibitem{rv}
F.~Russo and P.~Vallois.
\newblock Int\'egrales progressive, r\'etrograde et sym\'etrique de processus
  non adapt\'es.
\newblock {\em C. R. Acad. Sci. Paris S\'er. I Math.}, 312(8):615--618, 1991.

\bibitem{rv2}
F.~Russo and P.~Vallois.
\newblock The generalized covariation process and {I}t\^o formula.
\newblock {\em Stochastic Process. Appl.}, 59(1):81--104, 1995.

\bibitem{rv96}
F.~Russo and P.~Vallois.
\newblock It\^o formula for {$C\sp 1$}-functions of semimartingales.
\newblock {\em Probab. Theory Related Fields}, 104(1):27--41, 1996.

\bibitem{rv4}
F.~Russo and P.~Vallois.
\newblock Stochastic calculus with respect to continuous finite quadratic
  variation processes.
\newblock {\em Stochastics Stochastics Rep.}, 70(1-2):1--40, 2000.

\bibitem{Russo_Vallois_Book}
F.~Russo and P.~Vallois.
\newblock {\em Stochastic Calculus via Regularizations}, volume~11.
\newblock Springer International Publishing. Springer-Bocconi, 2022.

\end{thebibliography}

\end{document}